\documentclass[10pt, reqno]{amsart}
\usepackage{amssymb, amsmath, amsthm, chngcntr, enumerate, mathabx, mathrsfs, mathtools, tikz, url}
\usepackage[numbers]{natbib}

\usetikzlibrary{arrows,calc}

\numberwithin{equation}{section}
\newtheorem{thm}{Theorem}

\newtheorem{corollary}[thm]{Corollary}
\newtheorem{lemma}[thm]{Lemma}
\newtheorem{proposition}[thm]{Proposition}

\newtheorem{rmk}[thm]{Remark}

\newtheorem*{theorem*}{Theorem}

\newtheorem*{definition}{Definition}

\newcommand{\C}{\mathbb{C}}
\newcommand{\R}{\mathbb{R}}
\newcommand{\Z}{\mathbb{Z}}
\newcommand{\N}{\mathbb{N}}
\newcommand{\T}{\mathbb{T}}

\title[Variants of the inequalities of Paley and Zygmund]{Variants of the inequalities of Paley and Zygmund}

\author{Odysseas Bakas}
\address{Room 4606, James Clerk Maxwell Building, University of Edinburgh, Peter Guthrie Tait Road, Edinburgh, EH9 3FD.}
\email{o.bakas@sms.ed.ac.uk}

\date{}

\begin{document}

\maketitle

\begin{abstract} We examine versions of the classical inequalities of Paley and Zygmund for functions of several variables. A sharp multiplier inclusion theorem and variants on the real line are obtained.
\end{abstract}

\section{Introduction}

Let $\Lambda = (\lambda_n)_{n \in \N}$ be a lacunary sequence of positive integers, namely $\Lambda  \subset \N$ and  $\inf_{n \in\N} \lambda_{n+1}/ \lambda_n >1$. In \cite{Paley}, Paley proved that for every function $f$ in the Hardy space $H^1 (\T)$, $(\widehat{f}(n))_{n \in \Lambda} $ is square summable. Equivalently, by the closed graph theorem, there is a constant $C_{\Lambda}>0$ such that 
\begin{equation}\label{classicalPaley}
\big( \sum_{n \in \Lambda} |\widehat{f} (n) |^2 \big)^{1/2} \leq C_{\Lambda} \| f \|_{H^1 (\T)}.
\end{equation}
Zygmund proved in \cite{Zygmund_paper} (see also theorem 7.6 in Chapter XII of \cite{Zygmund_book}) that for every lacunary sequence $\Lambda = (\lambda_n)_{n \in \N}$ there are positive constants $A_{\Lambda}$ and $B_{\Lambda}$, depending only on the ratio $\rho_{\Lambda} = \inf_{n \in\N} \lambda_{n+1}/ \lambda_n $ of $\Lambda$, such that
\begin{equation}\label{classicalZygmund}
\big( \sum_{n \in \Lambda} |\widehat{f} (n) |^2 \big)^{1/2} \leq A_{\Lambda} \int_{\T} |f(\theta)|  \log^{1/2} (1+  |f (\theta) |)  d \theta + B_{\Lambda}.
\end{equation}

Several authors have studied variants of (\ref{classicalPaley}) and (\ref{classicalZygmund}). Regarding Paley's inequality, in 1937, in \cite{H-L}, Hardy and Littlewood proved that if $M = (m(n))_{n \in \N_0}$ satisfies the property
\begin{equation}\label{Paley_mult}
\sup_{N \in \N_0} \sum_{N \leq n \leq 2N} |m(n)|^2 < \infty,
\end{equation}
then $M = (m(n))_{n \in \N_0}$ is a multiplier from $H^1 (\T)$ to $L^2 (\T)$. In the opposite direction, Rudin proved in \cite{Rudin_Paley} that if $\chi_{\Lambda} $ is a multiplier from $H^1 (\T)$ to $L^2 (\T)$, then $\chi_{\Lambda}$ satisfies (\ref{Paley_mult}), or equivalently, $\Lambda$ satisfies Paley's inequality if and only if, $\Lambda$ can be written as a finite union of lacunary sequences. In \cite{DurenShields}, Duren and Shields extended Rudin's result showing that in fact every multiplier $M = ( m(n) )_{n \in \N_0}$ from $H^1 (\T)$ to $L^2 (\T)$ necessarily satisfies (\ref{Paley_mult}). In \cite{Oberlin}, D. Oberlin extended the aforementioned results to higher dimensions. For other variants of Paley's inequality see, e.g., \cite{Blasco}, \cite{Fournier}, \cite{Pisier_Paley}, \cite[Theorem 8.6]{Rudin_book} and \cite{Yudin}.

A remarkable extension of Zygmund's inequality was obtained by Rudin in his celebrated paper \cite{Rudin_gaps}. In particular, Rudin extended Zygmund's inequality from lacunary sequences to Sidon sets in $\Z$ in \cite{Rudin_gaps} and, in \cite{Rudin_book}, he extended Zygmund's inequality to Sidon sets in the dual of any compact abelian group. In other words, Rudin proved in \cite{Rudin_book} that if $\Lambda$ is a Sidon set in the dual of a compact abelian group $G$, then $\chi_{\Lambda}$ is a multiplier from $L \log^{1/2} L (G)$ to $L^2 (G)$. Moreover, Rudin conjectured that if $\chi_{\Lambda}$ is a multiplier from $L \log^{1/2} L (G)$ to $L^2 (G)$, where $\Lambda$ is an infinite set in the dual of $G$, then $\Lambda$ is a Sidon set. In \cite{Pisier_Sidon}, Pisier proved that this is indeed the case and moreover, one can actually obtain a characterisation of the multipliers from $L \log^{1/2} L (G)$ to $L^2 (G)$, see \cite {MarcusPisier}. Furthermore, higher-dimensional versions of Rudin's extension of Zygmund's inequality are well-known. Namely, if $\Lambda_j$ is a Sidon set in the dual of a compact abelian group $G_j$, then $\Lambda = \Lambda_1 \times \cdots \cdots \times \Lambda_n$ satisfies the following ``$n$-dimensional'' version of Zygmund's inequality
\begin{equation}\label{well-known}
\Big( \sum_{\gamma \in \Lambda} |\widehat{f} (\gamma)|^2 \Big)^{1/2} \leq A_{\Lambda}\int_G |f(x)| \log^{n/2} (1+|f(x)|) dx + B_{\Lambda},
\end{equation}
where $ G= G_1 \times \cdots \times G_n $, see e.g. \cite[Chapter VII]{Blei} or \cite[Remarque, p.~24]{Pisier_aleatoires}.

\subsection{Results and organisation of the paper} The present paper is organised as follows. In the next section we give some preliminaries and background. In section \ref{Zygmund_n} we introduce the notion of Sidon weights and then we extend (\ref{well-known}) from products of Sidon sets to products of Sidon weights. A question that arises naturally is whether all multipliers from $L \log^{1/2} L $ to $L^2$ are Sidon weights. In section \ref{P-Z} we give a negative answer to this question, based on a sharp multiplier inclusion theorem for functions defined over the torus that, broadly speaking, connects the classical inequalities of Paley and Zygmund. More precisely, in section \ref{P-Z} we show that the class of all multipliers from $H^1 (\T)$ to $L^2 (\T)$ is properly contained in the class of the multipliers from $L \log^{1/2} L (\T)$ to $L^2 (\T)$. Moreover, the inclusion is sharp in the sense that the exponent $r=1/2$ in $L \log^{1/2} L (\T)$ cannot be improved. As a corollary of this multiplier inclusion theorem, we give an example of a multiplier from $L \log^{1/2} L (\T)$ to $L^2 (\T)$ which is not a Sidon weight. In section \ref{real_line} we obtain an analogous inclusion theorem for functions defined on the real line by using a result of Tao and Wright on a Littlewood-Paley characterisation of functions in $L \log^{1/2} L$ with mean zero.
In the last section we study some further variants of Zygmund's inequality in higher dimensions. In particular, we obtain higher-dimensional extensions of a classical result of Bonami \cite{Bonami} and as a corollary of our results we get a special case of (\ref{well-known}) for products of lacunary sequences in $\Z$.
\subsection*{Acknowledgement} The author would like to thank and acknowledge his PhD supervisor, Professor Jim Wright, for his continuous help, support and guidance on this work and for all his useful comments and suggestions that improved the presentation of this paper.
\section{Notation and background} 
We denote by $\N$ the set of positive integers and by $\N_0$  the set of non-negative integers.

The cardinality of a set $A$ is denoted by $\#\{A\}$.

The class of all intervals of the form $\pm [2^k, 2^{k+1})$, $k \in \Z$ will be denoted by $\mathcal{I}$.

For $n \geq 2$, we use the notation $\underline{x} = (x_1, \cdots, x_n)$ for elements in $n$-dimensional euclidean space $\R^n$.

Let $G$ be a compact abelian group whose dual is denoted by $\widehat{G}$. Let $X$ be a subspace of $L^1 (G)$. We say that $m \in \ell^{\infty} (\widehat{G})$ is a multiplier from $X$ to $L^2 (G)$ if and only if, for every $f \in X$ one has
$$ \sum_{\gamma \in \widehat{G}} |m(\gamma) \widehat{f} (\gamma)|^2 < \infty.$$
The class of all multipliers from $X$ to $L^2 (G)$ will be denoted by $\mathcal{M}_{X \rightarrow L^2 (G)}$. 

The expression $X \lesssim Y$ means that there exists a positive constant $ C$ such that  $X \leq CY $. To specify the  dependence of this constant on additional parameters e.g. on $\alpha$ we  write $ X \lesssim_{\alpha} Y$. If $X \lesssim Y$ and $Y \lesssim X$, we write $X \sim Y$.

\subsection{Thin sets in Harmonic Analysis}\label{thin_sets}
Let $G$ be a compact abelian group and let $\Lambda$ be a non-empty set in its dual $\widehat{G}$. We say that a trigonometric polynomial $f$ on $G$ is a $\Lambda$-polynomial if and only if, $\widehat{f} (\gamma) = 0 $ for all $\gamma \in \widehat{G} \setminus \Lambda$.

Motivated by a classical result of Sidon \cite{Sidon}, Rudin defined in \cite{Rudin_gaps} the notion of Sidon sets (see also \cite{Rudin_book}). More specifically, an infinite set $\Lambda$ in the dual of a compact abelian group $G$ is said to be a Sidon set if and only if there is an absolute constant $S_{\Lambda} >0$ such that
\begin{equation}\label{Sidon}
 \sum_{\gamma \in \Lambda} |\widehat{f} (\gamma)| \leq S_{\Lambda} \| f\|_{L^{\infty}(G)}
\end{equation}
for every $\Lambda$-polynomial $f$. The smallest constant $S_{\Lambda}$ such that (\ref{Sidon}) holds is called the Sidon constant of $\Lambda$. Note that if $\Lambda$ is a Sidon set, then for every $\Lambda$-polynomial $f$ one automatically has
\begin{equation}\label{Rider}
  \sum_{\gamma \in \Lambda} |\widehat{f} (\gamma)| \leq R_{\Lambda} [| f |] 
\end{equation}
 where $[| f |] = \mathbb{E}\big[   \Big\| \sum_{\gamma \in \widehat{G}} r_{\gamma}  \widehat{f} (\gamma) \gamma \Big\|_{L^{\infty}(G)} \big]$ and $(r_{\gamma})_{\gamma}$ denotes the set of Rademacher functions. 
A classical result of Rider \cite{Rider} shows that the converse also holds, namely if $\Lambda$ satisfies (\ref{Rider}), then it is a Sidon set.

Let $p >2$. We say that $\Lambda \subset \widehat{G}$ is a $\Lambda(p) $ set if and only if, there exists a constant $A(\Lambda, p) >0$ such that
 $$\| f \|_{L^p (G)} \leq A(\Lambda,p) \| f \|_{L^2 (G)} $$
for every $\Lambda$-polynomial $f$. 

By Rudin's extension of Zygmund's inequality \cite{Rudin_book} and Pisier's characterisation of Sidon sets \cite{Pisier_Sidon}, a set $\Lambda$ is Sidon if and only if, for each $p>2$, $\Lambda$ is a $\Lambda(p)$ set with $A(\Lambda,p) \leq A(\Lambda) p^{1/2}$, where $A(\Lambda)>0$ is a constant that does not depend on $p$. For other proofs of Pisier's theorem, see \cite{Bourgain_Sidon} and \cite{Sidonicity}. For more details on Sidon sets, see the book \cite{Sidon_book}.

\subsection{Hardy spaces}\label{Hardy_intro}The (real) Hardy space $H^1 (\R)$ is defined to be the space of all integrable functions $f$ on $\R$ such that $H (f) \in L^1 (\T)$, where $ H (f)$ is the Hilbert transform of $f$. One defines $\| f \|_{H^1 (\R)} = \| f \|_{L^1 (\R)} + \| H (f) \|_{L^1 (\R)}$. The (real) product Hardy space on $H^1_{\mathrm{prod}} (\R^2)$ is defined as the space of all $f \in L^1 (\R^2)$ such that $H_1 (f), H_2 (f), H_1 \otimes H_2 (f) \in L^1 (\R^2)$, where $H_i$ denotes the Hilbert transform in the $i$-th variable. Similarly one defines $H^1_{\mathrm{prod}} (\R^n)$. One can define real Hardy spaces in the periodic setting in a similar way.

Let $\eta$ denote an even function supported in $\pm [1,3]$ such that $\eta|_{[3/2,2]} \equiv 1$ and $\eta$ is affine on $[1,3/2]$ and on $[3/2,2]$. It is known \cite{Stein_multiplicateurs} that $H^1 (\R)$ admits a square function characterisation, namely $\| f \|_{H^1 (\R)} \sim \| S  (f) \|_{L^1 (\R)} $, where 
$$ S (f) (x) = \big( \sum_{k \in \Z} |\Delta_k (f) (x)|^2 )^{1/2}$$ 
and $\Delta_k (f) $ is given by $ \widehat{\Delta_k} (f) (\xi) =  \eta(2^{-k} \xi) \widehat{f} (\xi) $. An analogous square function characterisation holds in the periodic setting.

\section{Higher dimensional variants of Zygmund's inequality}\label{Zygmund_n} 

In this section we examine weighted versions of (\ref{well-known}). More specifically, given compact abelian groups $G_1, \cdots, G_n$, we shall obtain a class of multipliers from $L \log^{n/2} L (G_1 \times \cdots \times G_n)$ to $L^2 (G_1 \times \cdots \times G_n)$ that properly contains multipliers of the form $\chi_{\Lambda_1 \times \cdots \times \Lambda_n}$, where $\Lambda_j $ is a Sidon set in the dual of $G_j$ ($j=1, \cdots, n$). We begin by defining the notion of Sidon weights which is a weighted analogue of the notion of Sidon sets.

\begin{definition}[Sidon weights]
Let $G$ be a compact abelian group. 

A function $m : \widehat{G} \rightarrow \C$ is said to be a Sidon weight on $G$ if and only if there is a positive constant $S_m$ such that
\begin{equation}\label{wSidon}
 \sum_{\gamma \in \mathrm{supp}(m)} | m(\gamma ) \widehat{f} (\gamma) |  \leq S_m \| f \|_{L^{\infty} (G)}
\end{equation}
for every trigonometric polynomial $f$ on $G$ whose Fourier transform is supported in $\mathrm{supp}(m)$.
\end{definition}

Note that, by (\ref{wSidon}), every Sidon weight is a bounded function on $\widehat{G}$. Moreover, if $\Lambda$ is a Sidon set in the dual of $G$, then every bounded function supported in $\Lambda$ is a Sidon weight. Therefore, the notion of Sidon weights extends that of Sidon sets.

As it is mentioned in the introduction, in \cite{MarcusPisier} it is shown that a bounded function $m$ on $\widehat{G}$ is a multiplier from $L^2 (G)$ to $\exp L^2 (G) $, or equivalently it is a multiplier from $L \log^{1/2} L (G)$ to $L^2 (G)$, if and only if,
\begin{equation}\label{Pisiermult} \sum_{\gamma \in \widehat{G}} |m(\gamma) \widehat{f} (\gamma)| \leq C _m [| f |]
\end{equation}
for all $f \in C_{\mathrm{a.s.}} (G) = \{ f \in L^2 (G) : \sum_{\gamma \in \widehat{G}} r_{\gamma} (\omega) \widehat{f} (\gamma) \gamma \ \mathrm{is}\ \omega-\mathrm{almost} \ \mathrm{surely} \ \mathrm{in} \ C(G)\},$ where 
$(r_{\gamma})_{\gamma}$ denotes the set of Rademacher functions and $[| f |]$ is as in section \ref{thin_sets}. For more details see \cite[Chapter XI]{MarcusPisier} (and in particular \cite[Corollary XI.1.5]{MarcusPisier}). It is clear that every Sidon weight on $\widehat{G}$ automatically satisfies (\ref{Pisiermult}) and hence, Sidon weights are multipliers from $L \log^{1/2} L (G)$ to $L^2 (G)$. As it is mentioned in section \ref{thin_sets}, in the unweighted setting, a classical result of Rider \cite{Rider} asserts that if for every $\Lambda$-polynomial $f$ one has
$$ \sum_{\gamma \in \Lambda} |\widehat{f} (\gamma)| \leq R_{\Lambda} [| f |], $$  
then $\Lambda$ is a Sidon set. Therefore, the following question arises. \emph{Does Rider's result hold in the weighted setting? In other words, is it true that every multiplier from $L \log^{1/2} L (G)$ to $L^2 (G)$ is a Sidon weight?} In the next section we will see that the answer to this question is \emph{no}.

In the rest of this section we focus on higher-dimensional variants of (\ref{well-known}). By adapting the arguments of Rudin \cite{Rudin_gaps} that extend Zygmund's inequality to Sidon sets, one can show that Sidon weights are multipliers from $L \log^{1/2} L (G)$ to $L^2 (G)$ without appealing to the aforementioned characterisation of the class $\mathcal{M}_{L \log^{1/2} L (G) \rightarrow L^2 (G)}$. 
Indeed, towards this aim, the first step is to obtain the following proposition, which is a weighted version of a well-known characterisation of Sidon sets \cite[Theorem 5.7.3]{Rudin_gaps}. We omit the proof as it is a straightforward adaptation of the corresponding one given by Rudin.
\begin{proposition}[Characterisation of Sidon weights]\label{weight_char}
Let $G$ be a compact abelian group and let $m: \widehat{G} \rightarrow \C$ be a function. Put $\Lambda_m = \mathrm{supp}(m)$. 

The following are equivalent: 
\begin{enumerate}
\item $m$ is a Sidon weight.
\item For every $b \in \ell^{\infty} (\Lambda_m) $ there exists a measure $\nu_b \in M(G) $ such that $\widehat{\nu_b} (\gamma) = b(\gamma) m(\gamma)$ for every $\gamma \in \Lambda_m$ and $\| \nu_b \| \leq C_m \| b \|_{\ell^{\infty} (\Lambda_m)}$, where $C_m > 0$ is a constant that depends only on $m$ and not on $b$.
\end{enumerate}
\end{proposition}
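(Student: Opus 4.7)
The plan is to mimic the classical Hahn-Banach/duality argument that Rudin uses in his characterisation of Sidon sets, keeping track of the weight $m$. Throughout I would take the space $\mathcal{P}_{\Lambda_m}$ of trigonometric polynomials on $G$ whose spectrum is contained in $\Lambda_m = \mathrm{supp}(m)$, viewed as a subspace of $C(G)$ with the sup norm.

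For the implication $(2) \Rightarrow (1)$, I would start with an arbitrary $\Lambda_m$-polynomial $f$ and want to bound $\sum_{\gamma \in \Lambda_m} |m(\gamma)\widehat{f}(\gamma)|$. Choose $b \in \ell^{\infty}(\Lambda_m)$ with $\|b\|_{\infty}\le 1$ to align phases so that $b(\gamma) m(\gamma) \widehat{f}(\gamma) = |m(\gamma) \widehat{f}(\gamma)|$ for all $\gamma \in \Lambda_m$. Invoking (2) yields a measure $\nu_b \in M(G)$ with $\widehat{\nu_b}(\gamma) = b(\gamma)m(\gamma)$ on $\Lambda_m$ and $\|\nu_b\| \leq C_m$. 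Since $\widehat{f}$ is supported in $\Lambda_m$, the Parseval-type identity for trigonometric polynomials against measures gives
\begin{equation*}
\sum_{\gamma \in \Lambda_m} |m(\gamma)\widehat{f}(\gamma)| = \sum_{\gamma} \widehat{f}(\gamma)\widehat{\nu_b}(\gamma) = (f * \nu_b)(0),
\end{equation*}
and the last quantity is bounded by $\|\nu_b\| \|f\|_{L^{\infty}(G)} \leq C_m \|f\|_{L^{\infty}(G)}$. Hence $m$ is a Sidon weight with $S_m \leq C_m$.

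For the converse $(1) \Rightarrow (2)$, fix $b \in \ell^{\infty}(\Lambda_m)$ and define a linear functional $T$ on $\mathcal{P}_{\Lambda_m}$ by $T(f) = \sum_{\gamma \in \Lambda_m} b(\gamma) m(\gamma) \widehat{f}(\gamma)$. The Sidon weight hypothesis (\ref{wSidon}) gives the pointwise bound $|T(f)| \leq S_m \|b\|_{\ell^{\infty}(\Lambda_m)} \|f\|_{L^{\infty}(G)}$, so $T$ is a bounded linear functional on the subspace $\mathcal{P}_{\Lambda_m} \subset C(G)$ with norm at most $S_m \|b\|_{\ell^{\infty}(\Lambda_m)}$. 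By the Hahn-Banach theorem, $T$ extends to a bounded linear functional on $C(G)$ with the same norm, and by the Riesz representation theorem this extension is given by integration against a complex measure $\nu_b \in M(G)$ with $\|\nu_b\| \leq S_m \|b\|_{\ell^{\infty}(\Lambda_m)}$. Testing $T$ on individual characters $\gamma \in \Lambda_m$ (whose Fourier transforms are supported at the single point $\gamma$) shows $\widehat{\nu_b}(\gamma) = b(\gamma) m(\gamma)$ for every $\gamma \in \Lambda_m$, which is exactly (2) with $C_m = S_m$.

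There is no real obstacle here; the argument is essentially the one used by Rudin in the unweighted case, with the weight $m$ absorbed into the coefficients defining $T$. The only mild point requiring care is adhering to a consistent Fourier convention so that the Parseval-type identity in the first half and the testing against single characters in the second half produce $\widehat{\nu_b}(\gamma) = b(\gamma) m(\gamma)$ (rather than $\overline{b(\gamma) m(\gamma)}$ or a reflected version), but this is pure bookkeeping. This is presumably why the authors omit the proof.
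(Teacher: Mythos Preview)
Your proposal is correct and is exactly the straightforward adaptation of Rudin's classical duality/Hahn--Banach argument that the paper has in mind; indeed the paper omits the proof entirely, referring the reader to \cite[Theorem 5.7.3]{Rudin_gaps}. The Fourier-convention bookkeeping you flag is the only thing to watch, and as you say it is routine.
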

The next step is to make use of a standard adaptation of Rudin's argument to higher dimensions. For the unweighted multi-dimensional case, see e.g. \cite[Chapter VII]{Blei}. In particular, by using duality, multi-dimensional Khintchine's inequality, the above characterisation of Sidon weights and the fact that if $\nu_j \in M(G_j)$ ($j=1, \cdots, n$) then $\| \nu_1 \otimes \cdots \otimes \nu_n \| \lesssim_n \| \nu_1 \| \cdots \| \nu_n \| $, one obtains the following weighted extension of (\ref{well-known}). We omit the proof.

\begin{proposition}\label{weighted_Zyg} Let $G_1, \cdots, G_n$ be compact abelian groups. For $j =1, \cdots, n$, let $m_j : \widehat{G_j} \rightarrow \C$ be Sidon weights. 

Set $G = G_1 \times \cdots \times G_n$ and $m = m_1 \otimes \cdots \otimes m_n$. Then there are positive constants $A_{m,n}$ and $B_{m,n}$,  depending only on $m$ and $n \in \N$, such that
\begin{equation}\label{wZyg}
\Big( \sum_{\gamma \in \widehat{G} } |m (\gamma) \widehat{f} (\gamma)|^2 \Big)^{1/2} \leq A_{m,n} \int_G |f(x)| \log^{n/2 }(1+ |f(x)|) dx + B_{m,n}.
\end{equation}
In particular, $m  = m_1 \otimes \cdots \otimes m_n$ is a multiplier from $L \log^{n/2} L (G)$ to $L^2 (G)$.
\end{proposition}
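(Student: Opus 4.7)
The plan is to establish (\ref{wZyg}) by dualizing to the Orlicz space $\exp L^{2/n}(G)$, since $(L\log^{n/2} L(G))^\ast$ coincides with $\exp L^{2/n}(G)$ under the canonical pairing. Writing $\Lambda_j = \mathrm{supp}(m_j)$ and $\Lambda = \Lambda_1 \times \cdots \times \Lambda_n$, the assertion is equivalent, modulo absorbing the additive term, to the moment estimate
\[
\|F\|_{L^p(G)} \lesssim_{m,n} p^{n/2} \|c\|_{\ell^2(\Lambda)} \qquad (p \geq 2)
\]
for every trigonometric polynomial of the form $F(x) = \sum_{\gamma \in \Lambda} c_\gamma\, m(\gamma)\, \gamma(x)$ with $(c_\gamma)_{\gamma \in \Lambda}$ finitely supported. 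I would follow Rudin's scheme for Sidon sets, substituting the measures produced by Sidonicity with those furnished by Proposition \ref{weight_char}.

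First I would randomize. Introducing independent Rademacher systems indexed by the coordinates and setting $r_\gamma(\omega) = r_{\gamma_1}(\omega_1) \cdots r_{\gamma_n}(\omega_n)$ on $\Omega = \Omega_1 \times \cdots \times \Omega_n$, define
\[
H_\omega(x) = \sum_{\gamma \in \Lambda} r_\gamma(\omega)\, c_\gamma\, \gamma(x).
\]
Iterating Khintchine's inequality coordinate by coordinate and invoking Fubini yields
\[
\int_\Omega \|H_\omega\|_{L^p(G)}^p \, d\omega \lesssim_n p^{np/2} \|c\|_{\ell^2}^p,
\]
so some $\omega_0 = (\omega_{0,1}, \ldots, \omega_{0,n}) \in \Omega$ satisfies $\|H_{\omega_0}\|_{L^p(G)} \lesssim_n p^{n/2} \|c\|_{\ell^2}$.

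Next, the Sidon-weight hypothesis is invoked to convert $H_{\omega_0}$ back into $F$. For each $j$, the sequence $b_j(\gamma_j) = r_{\gamma_j}(\omega_{0,j})$ on $\Lambda_j$ is bounded, so Proposition \ref{weight_char} produces a measure $\nu_j \in M(G_j)$ with $\widehat{\nu_j}(\gamma_j) = r_{\gamma_j}(\omega_{0,j})\, m_j(\gamma_j)$ on $\Lambda_j$ and $\|\nu_j\| \lesssim_{m_j} 1$. Forming the tensor $\nu = \nu_1 \otimes \cdots \otimes \nu_n \in M(G)$, and using the submultiplicativity of the total-variation norm for product measures recalled before the statement, one has $\|\nu\| \lesssim_{m,n} 1$ and $\widehat{\nu}(\gamma) = r_\gamma(\omega_0)\, m(\gamma)$ on $\Lambda$. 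The key identity $\nu * H_{\omega_0} = F$ then follows at the level of Fourier coefficients, using $r_\gamma(\omega_0)^2 \equiv 1$, and yields
\[
\|F\|_{L^p(G)} \leq \|\nu\|\, \|H_{\omega_0}\|_{L^p(G)} \lesssim_{m,n} p^{n/2} \|c\|_{\ell^2}.
\]
Undoing the duality produces (\ref{wZyg}).

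The argument is essentially bookkeeping: one must propagate the factor $p^{n/2}$ correctly through the iterated Khintchine step and the tensorization of the $\nu_j$. The only place where the hypothesis is genuinely used is in applying Proposition \ref{weight_char} in each coordinate, and the only point requiring mild care is the Fourier verification $\nu * H_{\omega_0} = F$, which works precisely because the tensor structure of $m = m_1 \otimes \cdots \otimes m_n$ matches the product of Rademachers introduced in the randomization.
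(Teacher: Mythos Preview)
Your proposal is correct and follows essentially the same route that the paper outlines (and then omits): pass by duality to the dual moment estimate $\|F\|_{L^p(G)}\lesssim p^{n/2}\|c\|_{\ell^2}$, randomize with a product Rademacher system and apply multi-dimensional Khintchine, then use Proposition~\ref{weight_char} in each factor to produce measures $\nu_j$ whose tensor product $\nu=\nu_1\otimes\cdots\otimes\nu_n$ satisfies $\|\nu\|\lesssim_{m,n}1$ and realizes $F=\nu*H_{\omega_0}$. These are precisely the four ingredients the paper lists, so there is no substantive difference between your argument and the intended one.
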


\begin{rmk}\label{comment_on_Rudin}
Since Proposition \ref{weight_char} characterises Sidon weights, it doesn't seem that Rudin's argument can be adapted to the case where $m_j : \widehat{G_j} \rightarrow \C$ are just multipliers from $L \log^{1/2} L (G_j)$ to $L^2 (G_j)$, as we will see that the class of  multipliers from $L\log^{1/2} L$ to $L^2$ is strictly larger than Sidon weights. 
\end{rmk}

The converse of Proposition \ref{weighted_Zyg} is not true, even in the ``one-dimensional'' case. However, in the classical setting, namely in the unweighted case, the converse holds.

\begin{proposition}\label{product_characterisation}  Let $ n \geq 2 $ be given. Let $G_1, \cdots, G_n $ be compact abelian groups and for $j \in \{1, \cdots, n \} $, let $\Lambda_j \subset \widehat{G}_j$ be finite or countably infinite.

 Put $ G = G_1 \times \cdots \times G_n $, $ \Lambda = \Lambda_1 \times \cdots \times \Lambda_n$, and  $ S = \big\{ j \in \{ 1, \cdots, n \} : \#\{ \Lambda_j \}= \infty \big\} $. Assume that $ S \neq \emptyset$. Then, the inequality
\begin{equation}\label{ineq2}
  \Big( \sum_{\gamma  \in  \Lambda}  | \widehat{f} (\gamma) |^2  \Big)^{1/2} \leq A_{\Lambda,n} \int_G | f(x) | \log^{|S|/2} (1+ | f(x) |)  dx + B_{\Lambda,n},
\end{equation}
where $A_{\Lambda,n}$ and $B_{\Lambda,n}$ are positive constants that depend only on $\Lambda$ and on $n \in \N$, holds if and only if, $\Lambda_j$ is a Sidon set for each $j \in S$.
\end{proposition}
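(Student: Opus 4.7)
My plan is to reformulate (\ref{ineq2}) as the normalised $\Lambda(p)$-type bound
\begin{equation*}
\|f\|_{L^p(G)}\le C\,p^{|S|/2}\|f\|_{L^2(G)}\qquad(p\ge 2),
\end{equation*}
valid on all $\Lambda$-polynomials $f$. This equivalence is standard, following from the duality between the Orlicz spaces $L\log^{|S|/2}L$ and $\exp L^{2/|S|}$ together with the Yano-type identification $\|g\|_{\exp L^{2/|S|}}\sim\sup_{p\ge 2}p^{-|S|/2}\|g\|_{L^p}$; the additive term $B_{\Lambda,n}$ is absorbed because the $L^2$ formulation is scale-homogeneous in $f$. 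For $|S|=1$ the reformulation is precisely the Marcus--Pisier statement cited in section~\ref{Zygmund_n}. Both directions will be treated using Pisier's characterisation that $\Lambda_j$ is Sidon if and only if $\|h\|_{L^p(G_j)}\lesssim\sqrt{p}\,\|h\|_{L^2(G_j)}$ on all $\Lambda_j$-polynomials $h$.

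For $(\Leftarrow)$, Pisier's theorem supplies $\|h\|_{L^p(G_j)}\lesssim\sqrt{p}\,\|h\|_{L^2(G_j)}$ for $j\in S$, while for $j\notin S$ the finite-dimensional bound $\|h\|_{L^p(G_j)}\le C_{\Lambda_j}\|h\|_{L^2(G_j)}$ is trivial. I would iterate these one-variable inequalities across $G=G_1\times\cdots\times G_n$: at each coordinate apply the $G_j$-bound pointwise in the remaining variables, then use Minkowski's integral inequality (available because $p\ge 2$) to exchange the outer $L^p$ with the inner $L^2(G_j)$ and move on to the next coordinate. The accumulated constant is $\prod_{j\in S}\sqrt{p}=p^{|S|/2}$, giving the desired $L^p$-bound.

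For $(\Rightarrow)$, assume the $L^p$-bound and fix $j_0\in S$; I show $\Lambda_{j_0}$ is $\Lambda(p)$ with constant $O(\sqrt{p})$, which by Pisier forces Sidonicity. Given a $\Lambda_{j_0}$-polynomial $h$ and an exponent $p\ge 2$, I would form the tensor product
\[
f=h\otimes\bigotimes_{j\in S\setminus\{j_0\}}\phi_j^{(p)}\otimes\bigotimes_{j\notin S}\gamma_j^{(0)},
\]
where $\gamma_j^{(0)}\in\Lambda_j$ is any fixed character and $\phi_j^{(p)}$ is a \emph{saturating} $\Lambda_j$-polynomial obeying $\|\phi_j^{(p)}\|_{L^p(G_j)}\ge c\sqrt{p}\,\|\phi_j^{(p)}\|_{L^2(G_j)}$ for an absolute $c>0$. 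Since $f$ is a $\Lambda$-polynomial whose $L^p(G)$- and $L^2(G)$-norms factorise across the $G_j$, inserting $f$ into the $L^p$-bound and cancelling the $|S|-1$ factors of $\sqrt{p}$ coming from the $\phi_j^{(p)}$ (and the trivial factors from the $\gamma_j^{(0)}$) yields $\|h\|_{L^p(G_{j_0})}\lesssim\sqrt{p}\,\|h\|_{L^2(G_{j_0})}$, as required.

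The principal obstacle is the existence of the saturating polynomials $\phi_j^{(p)}$, which I would establish as a separate lemma. A standard greedy argument extracts $N\sim p$ quasi-independent characters $\gamma_1,\dots,\gamma_N\in\Lambda_j$ from the infinite set $\Lambda_j$, possible since at each step only finitely many characters are forbidden by previously chosen ones. For any signs $\epsilon\in\{\pm 1\}^N$ the sum $\sum_k\epsilon_k\gamma_k$ has $L^2(G_j)$-norm equal to $\sqrt{N}$ by orthonormality, while Fubini combined with the Khintchine--Kahane lower bound gives $\mathbb{E}_\epsilon\|\sum_k\epsilon_k\gamma_k\|_{L^p(G_j)}^p\gtrsim (pN)^{p/2}$ in the range $p\lesssim N$, furnishing a deterministic choice of signs for which $\phi_j^{(p)}:=\sum_k\epsilon_k\gamma_k$ has the required ratio. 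This step is where the full hypothesis $\#\{\Lambda_j\}=\infty$ for $j\in S$ is consumed.
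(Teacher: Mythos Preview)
Your approach coincides with the paper's: both directions pass through the equivalent $\Lambda(p)$ formulation $\|f\|_{L^p(G)}\lesssim p^{|S|/2}\|f\|_{L^2(G)}$, the forward direction is obtained by iterating the one–variable bounds (the paper packages this as a citation of (\ref{well-known}) plus a triangle inequality over the finite factors, but it is the same iteration), and the backward direction uses exactly the same tensor test function $F=h\otimes\bigotimes_{l\neq j_0}f_l$ together with Pisier's characterisation.

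The one point that needs care is your construction of the saturating polynomials $\phi_j^{(p)}$. The inequality you quote, $\mathbb{E}_\epsilon\bigl|\sum_k\epsilon_k a_k\bigr|^p\gtrsim (pN)^{p/2}$ for $|a_k|=1$ and $p\lesssim N$, is \emph{not} the Khintchine--Kahane lower bound: the sharp lower Khintchine constant for $p\ge 2$ is only of order $1$ (take $N=1$), so Khintchine by itself yields $\mathbb{E}_\epsilon\|\sum\epsilon_k\gamma_k\|_{L^p}^p\gtrsim N^{p/2}$, which after dividing by $\|\cdot\|_2=\sqrt{N}$ gives a ratio bounded below by a constant, not by $c\sqrt{p}$. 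The correct statement does hold, but it requires an extra ingredient exploiting the unimodularity $|\gamma_k(x)|=1$ --- e.g.\ a direct $2m$-th moment count using that the quasi-independent $\gamma_k$ behave like independent Steinhaus/Rademacher variables on $G$, or the Riesz-product route. The paper avoids this issue entirely by citing Rudin \cite[Theorem 3.4]{Rudin_gaps}, which furnishes precisely the polynomials you need; you could do the same, or replace the appeal to ``Khintchine--Kahane'' by the actual moment computation for sums of $N$ independent unimodular variables in the regime $p\lesssim N$.
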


\begin{proof} Suppose first that $\Lambda_j$ is a Sidon set for each $j \in S$. If $S = \{ 1, \cdots, n \}$, then (\ref{ineq2}) coincides with (\ref{well-known}). So, let us assume that $ S \neq \{  1, \cdots, n \} $. Without loss of generality, we may suppose that $ S = \{ 1, \cdots, k \} $, $ k < n $. That is, $ \Lambda_1, \cdots,\Lambda_k $ are infinite countable sets, whereas the sets $\Lambda_{k+1}, \cdots, \Lambda_n $ are finite. Set $\Lambda_S = \Lambda_1 \times \cdots \times \Lambda_k $. By duality, it is enough to show that for every $\Lambda$-polynomial $f$ one has
\begin{equation}\label{LpS}
\| f \|_{L^p (G) } \lesssim p^{|S|/2} \| f \|_{L^2 (G) } 
\end{equation}
where the implied constant depends only on $\Lambda_1, \cdots, \Lambda_n$ and not on $f$. 
The main idea is to prove (\ref{LpS}) first for the special case of $\Lambda_S \times \{ \gamma'_{k+1} \} \times \cdots \times \{ \gamma'_n \}$-polynomials, where $ \{ \gamma'_j \}  $ is an arbitrary element of $\Lambda_j$, $j \in \{ k+1, \cdots, n \}$. Fix $\gamma'_j \in \Lambda_j$, $j \in \{k+1, \cdots, n \}$. If we consider a $\Lambda_S \times \{ \gamma'_{k+1} \} \times \cdots \times \{ \gamma'_n \}$-polynomial $f$, then, by using (\ref{well-known}), one can easily check that (\ref{LpS}) holds for $f$.
Observe now that every $\Lambda$-polynomial can be written as a sum of at most $(\#\{\Lambda_{k+1}\}) \cdots (\#\{\Lambda_n\})$ $\Lambda$-polynomials of the special form studied in the previous step. Therefore, by using the triangle inequality one deduces that
$$ \| f \|_{L^p ( G ) } \leq (\#\{ \Lambda_{k+1} \}) \cdots (\#\{\Lambda_n\}) A_{\Lambda_S} p^{|S|/2} \| f \|_{L^2 ( G ) },   $$
where $A_{\Lambda_S}$ is a constant that depends only on $\Lambda_S$.

To obtain the opposite direction, by Pisier's characterisation of Sidon sets, it suffices to prove that if $f$ is a $\Lambda_j$-polynomial, then
\begin{equation}\label{Pisier_est}
\| f \|_{L^p (G_j)} \lesssim p^{1/2} \| f \|_{L^2 (G_j)} 
\end{equation} 
for all $p>2$, $j \in S$. Towards this aim, take $p>2$  and let $j \in S$ be fixed. Consider an arbitrary $\Lambda_j$-polynomial $f$.  Without loss of generality, we may assume that $S \setminus \{ j\} \neq \emptyset$. Note that if $f_l$ are $\Lambda_l$-polynomials, $l \in \{ 1, \cdots, n \} \setminus \{ j \} $ then the function $F$ on $G$ given by 
$$ F (x_1, \cdots , x_n) = f (x_j) \cdot \Big( \prod_{ l \in \{ 1, \cdots, n \} \setminus \{ j \} } f_l (x_l) \Big) $$
is a $\Lambda$-polynomial.  We define the  $\Lambda_l$-polynomials $f_l$ as follows,
\begin{itemize}
\item if $l \in S \setminus \{ j\} $, then choose $f_l$ to be a $\Lambda_l$-polynomial, which satisfies 
$$ \big\| f_l \big\|_{L^p (G_l)} \gtrsim p^{1/2} \big\| f_l \big\|_{L^2 (G_l)}. $$
This is possible thanks to a construction due to Rudin \cite[Theorem 3.4]{Rudin_gaps}.
\item If $l \notin S$, then put $f_l (x_l) =  \gamma'_l (x_l) $ for some $\gamma'_l \in \Lambda_l$. In that case, for all $q>0$, $\big\| f_l \big\|_{L^q (G_l)} = 1$.
\end{itemize}
Next, note that for each $ q > 0 $ one has
\begin{equation}\label{formula}
\| F \|_{L^q (G)} = \| f \|_{L^q (G_j)} \cdot \Big( \prod_{ l \in S \setminus \{ j \} } \| f_l \|_{L^q (G_l)} \Big)
\end{equation}
Since $F$ is a $\Lambda$-polynomial, it follows by hypothesis that
$$ \| F \|_{L^p (G)} \lesssim p^{|S|/2} \| F \|_{L^2 (G)}$$
and so, by (\ref{formula}) for $q=p$ and $q=2$ one obtains
$$  \| f \|_{L^p (G_j)} \cdot \Big( \prod_{ l \in S \setminus \{ j \} } \| f_l \|_{L^p (G_l)} \Big) \lesssim p^{|S|/2} \| f \|_{L^2 (G_j)} \cdot \Big( \prod_{ l \in S \setminus \{ j \} } \| f_l  \|_{L^2 (G_l)} \Big). $$
By our construction
$$ \prod_{ l \in S \setminus \{ j \} } \| f_l \|_{L^p (G_l)} \gtrsim  p^{ (|S| - 1)/2 } \cdot \prod_{ l \in S \setminus \{ j \} } \| f_l \|_{L^2 (G_l)} $$
and so, it follows that
$$  p^{ (|S| - 1)/2 }  \| f \|_{L^p (G_j)} \lesssim p^{|S|/2} \| f \|_{L^2 (G_j)}  $$
and hence, (\ref{Pisier_est}) holds. Therefore, by Pisier's characterisation of Sidon sets, $\Lambda_j$ is a Sidon set and the proof is complete.
\end{proof}
\section{A multiplier inclusion theorem}\label{P-Z}
By Rudin's characterisation of spectral sets satisfying classical Paley's inequality \cite{Rudin_Paley} it follows that $\chi_{\Lambda} \in \mathcal{M}_{H^1 (\T) \rightarrow L^2 (\T)}$ if and only if, $\Lambda$ is a finite union of lacunary sequences. Since finite unions of lacunary sequences are Sidon sets, one deduces that $\chi_{\Lambda} \in \mathcal{M}_{H^1 (\T) \rightarrow L^2 (\T)} $ implies that $\chi_{\Lambda} \in \mathcal{M}_{L \log^{1/2} L (\T) \rightarrow L^2 (\T)}$.  Motivated by this observation, one can naturally ask whether the class  
$\mathcal{M}_{H^1 (\T) \rightarrow L^2 (\T)} $ is contained\footnote{Note that since $\| H(f) \|_{L^1(\T) } \lesssim 1 + \int_{\T} |f| \log (1+|f|)$, where $H$ is the periodic Hilbert transform, one deduces that $L \log L (\T) \subset H^1 (\T)$ and hence, one trivially has $\mathcal{M}_{H^1 (\T) \rightarrow L^2 (\T)} \subset \mathcal{M}_{L \log L (\T) \rightarrow L^2 (\T)}$.} in the class $\mathcal{M}_{L \log^{1/2} L (\T) \rightarrow L^2 (\T)}$. Our main goal in this section is to show that this is indeed the case.

\begin{thm}\label{mult_inclusion1}
The class of all multipliers from $H^1 (\T )$ to $L^2 (\T)$ is contained in the class of all multipliers from $L \log^{1/2} L (\T) $ to $L^2 (\T)$, i.e. 
\begin{equation}\label{mult_1d}
\mathcal{M}_{H^1 (\T ) \rightarrow L^2 (\T )} \subset \mathcal{M}_{L \log^{1/2} L (\T)\rightarrow L^2 (\T)} 
\end{equation}
and the inclusion is proper.
\end{thm}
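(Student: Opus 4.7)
The plan is to reduce the inclusion to the combination of two well-known facts: the Hardy--Littlewood/Rudin/Duren--Shields characterisation of $\mathcal{M}_{H^1(\T) \to L^2(\T)}$ via the Paley condition \eqref{Paley_mult}, and Zygmund's classical inequality \eqref{classicalZygmund} for lacunary sequences. The crucial observation is that, although \eqref{Paley_mult} only controls dyadic \emph{sums} of $|m(n)|^2$ rather than their suprema, for each fixed $f$ the quantities $\sup_{n\in I}|\widehat{f}(n)|$ with $I\in\mathcal{I}$ are realised on a sequence that decomposes into a bounded number of lacunary subsequences, and hence are governed by Zygmund.

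Concretely, given $m\in\mathcal{M}_{H^1(\T)\to L^2(\T)}$, by \cite{DurenShields} one has $C_m^2 := \sup_{I\in\mathcal{I}}\sum_{n\in I}|m(n)|^2 < \infty$. For $f\in L\log^{1/2}L(\T)$ the trivial bound
\[
\sum_{n\in I}|m(n)\widehat{f}(n)|^2 \leq C_m^2 \sup_{n\in I}|\widehat{f}(n)|^2 \qquad (I\in\mathcal{I})
\]
reduces matters to estimating $\sum_{I\in\mathcal{I}}\sup_{n\in I}|\widehat{f}(n)|^2$. Selecting for each $I\in\mathcal{I}$ an integer $n_I\in I\cap\Z$ realising this supremum, the family $\{n_I\}_{I\in\mathcal{I}}$ splits -- by separating positive from negative frequencies and grouping the intervals $I=\pm[2^k,2^{k+1})$ according to the parity of $k$ -- into at most four lacunary sequences of ratio $\geq 2$. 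Applying \eqref{classicalZygmund} to each subsequence (passing to $\overline{f}$ to handle negative frequencies in the standard way) yields
\[
\Big(\sum_{I\in\mathcal{I}}|\widehat{f}(n_I)|^2\Big)^{1/2} \leq A\int_{\T}|f(\theta)|\log^{1/2}(1+|f(\theta)|)\, d\theta + B
\]
with absolute constants $A,B$, and combining the two displays shows that $m\in\mathcal{M}_{L\log^{1/2}L(\T)\to L^2(\T)}$.

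For strictness, I would exhibit a Sidon set $\Lambda\subset\Z$ which is not a finite union of lacunary sequences. Such $\Lambda$ exist: random or explicit constructions (see, e.g., the discussion in \cite{Rudin_gaps} and \cite[Chapter VII]{Blei}) furnish Sidon sets in $\Z$ of density far exceeding the $O(\log N)$-cap satisfied by finite unions of lacunary sequences. For any such $\Lambda$, Rudin's extension of Zygmund's inequality \cite{Rudin_gaps} gives $\chi_{\Lambda}\in\mathcal{M}_{L\log^{1/2}L(\T)\to L^2(\T)}$, whereas Rudin's characterisation of sets satisfying classical Paley's inequality \cite{Rudin_Paley} forces $\chi_{\Lambda}\notin\mathcal{M}_{H^1(\T)\to L^2(\T)}$. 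The main obstacle, in my view, is not the inclusion itself -- once the dyadic-maximum lacunary decomposition is recognised, Zygmund's inequality does all the work -- but rather justifying in a self-contained way the availability of a sufficiently dense Sidon set to witness strictness.
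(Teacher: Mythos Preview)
Your proposal is correct and follows essentially the same approach as the paper: both arguments invoke the Duren--Shields characterisation \eqref{Paley_mult}, bound $\sum_n |m(n)\widehat{f}(n)|^2$ by $C_m^2\sum_{I}\sup_{n\in I}|\widehat{f}(n)|^2$ over dyadic blocks, select maximisers, decompose the resulting sequence into finitely many lacunary subsequences of ratio $\geq 2$, and apply Zygmund's inequality \eqref{classicalZygmund}; the strictness argument via a Sidon set that is not a finite union of lacunary sequences is also identical (the paper cites \cite[Remark 2.5(3)]{Rudin_gaps} for an explicit construction). The only cosmetic difference is that the paper first reduces to $\mathrm{supp}(M)\subset\N_0$ and hence needs two lacunary subsequences, whereas you keep both signs and use four.
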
 

\begin{proof}
It follows by the work of Hardy and Littlewood \cite{H-L} and the work of Duren and Shields \cite{DurenShields} that $M = (m(n))_{n \in \Z}$ belongs to the class $\mathcal{M}_{H^1 (\T ) \rightarrow L^2 (\T )}$ if and only if,
\begin{equation}\label{cond_1d}
 \sup_{N \in \N} \sum_{N \leq |n| \leq 2N} |m(n)|^2 < \infty. 
\end{equation}
To prove our theorem the main idea is that one can rule out the multipliers in $\mathcal{M}_{H^1 (\T ) \rightarrow L^2 (\T )}$ with ``large'' support in the sets of  the form $I_K=  \pm [K,2K) \cap \Z$ (for example $m(n) = 1/\sqrt{|n|}$ for $n \neq 0$) and focus only on multipliers of the form $M = \chi_{\Lambda}$, where $\Lambda$ is a subset of integers satisfying $ \sup_{K \in \N} \#  \{ \pm [K,2K) \cap \Lambda \} \lesssim 1$ which can be handled by classical Zygmund's inequality.

To be more specific, let $M = (m(n))_{n \in \Z}$ be a given multiplier from $H^1 (\T)$ to $L^2 (\T)$. We may assume without loss of generality that $\mathrm{supp}(M) \subset \N_0$. We need to show that for every $f \in L \log^{1/2} L (\T)$ one has  
$$ \sum_{n \in \N_0} |m(n) \widehat{f}(n)|^2 < \infty . $$
For this, fix an arbitrary function $f \in L \log^{1/2} L (\T)$ and notice that
$$
\Big( \sum_{n \in \N_0} |m(n) \widehat{f}(n)|^2 \Big)^{1/2} = \Big(  \sum_{k \in \N_0} \sum_{2^k-1 \leq n \leq 2^{k+1} - 2  }  |m(n) \widehat{f}(n)|^2 \Big)^{1/2} $$
is majorised by
$$  \Big( \sum_{k \in \N_0} \max_{2^k -1 \leq n \leq 2^{n+1} - 2 } |\widehat{f} (n)|^2 \sum_{2^k - 1 \leq n \leq 2^{k+1} - 2  }  |m(n) |^2 \Big)^{1/2} .$$
Since $M = (m(n))_{n \in \Z}$ is a multiplier from $H^1 (\T)$ to $L^2 (\T)$ we have by (\ref{cond_1d}),
$$ A_M = \sup_{k \in \N_0} \sum_{2^k-1 \leq n \leq 2^{k+1} - 2  }  |m(n) |^2 < \infty $$
and hence,
$$ \Big( \sum_{n \in \N_0} |m(n) \widehat{f}(n)|^2 \Big)^{1/2} \leq A_M^{1/2} \Big( \sum_{k \in \N_0} \max_{2^k - 1 \leq n \leq 2^{k+1} - 2 } |\widehat{f} (n)|^2 \Big)^{1/2} . $$
Hence, it suffices to prove that
\begin{equation}\label{estimate}
 \sum_{k \in \N_0} \max_{2^k -1  \leq n \leq 2^{k+1} - 2 } |\widehat{f} (n)|^2  < \infty. 
\end{equation}
For $k \in \N_0$, denote the ``dyadic'' interval of integers $[ 2^k - 1, 2^{k+1} - 2 ] \cap \N_0 $ by $I_k$. For each $k \in \N_0$ choose a $\lambda_k \in I_k$ such that 
$$\max_{n \in I_k} |\widehat{f} (n)| = |\widehat{f} (\lambda_k)| .$$
In such a way we construct a sequence of positive integers $(\lambda_k)_{k \in \N_0}$, depending on $f$, such that
\begin{itemize}
\item $\lambda_k \in I_k$ for every $k \in \N_0$ and
\item $\sum_{k \in \N_0} \max_{n \in I_k } |\widehat{f} (n)|^2 = \sum_{k \in \N_0} |\widehat{f} (\lambda_k)|^2. $
\end{itemize}
Therefore, it is enough to show that $ \sum_{k \in \N_0} |\widehat{f} (\lambda_k)|^2 < \infty$. Notice that the first property listed above does not necessarily imply that $(\lambda_k)_{k \in \N_0}$ is lacunary and so one cannot make use of Zygmund's inequality directly. However, if we decompose $(\lambda_k)_{k \in \N_0} = \Lambda_1 \cup \Lambda_2$, where $\Lambda_1 = (\lambda_{2k})_{k \in \N_0}$ and $\Lambda_2 = (\lambda_{2k+1})_{k \in \N_0}$, then $\Lambda_1$ and $\Lambda_2$ are lacunary sequences with $2 \leq \rho_{\Lambda_i} \leq  16$, $i=1,2$. We thus deduce by (\ref{classicalZygmund}) that
$$  \sum_{k \in \N_0} |\widehat{f} (\lambda_n)|^2  = \sum_{n \in \Lambda_1} |\widehat{f} (n)|^2  + \sum_{n \in \Lambda_2} |\widehat{f}(n)|^2 < \infty $$
and this completes the proof of the inclusion (\ref{mult_1d}).

To prove that the inclusion is proper, take a Sidon set $\Lambda$ in $\Z$ which cannot be written as a finite union of lacunary sequences, see \cite[Remark 2.5(3)]{Rudin_gaps}. Then, by Rudin's characterisation of spectral sets satisfying Paley's inequality and Rudin's extension of Zygmund's inequality, $\chi_{\Lambda} \in \mathcal{M}_{L \log^{1/2} L (\T) \rightarrow L^2 (\T)} \setminus \mathcal{M}_{H^1 (\T) \rightarrow L^2 (\T)}$.
 \end{proof}

\begin{rmk} If $\Lambda$ is a lacunary sequence of positive integers with ratio $\rho_{\Lambda} \geq 2$, then the Sidon constant $S_{\Lambda}$ of $\Lambda$ is independent of $\rho_{\Lambda}$, see, e.g., \cite{Sidon_book}. Also, it can be shown that the constants $A_{\Lambda}$ and $B_{\Lambda}$ in (\ref{classicalZygmund}), actually, depend only on $S_{\Lambda}$ and hence, if $\rho_{\Lambda} \geq 2$, the constants $A_{\Lambda}$ and $B_{\Lambda}$ can be taken to be independent of $\Lambda$. 
Therefore, the argument in the proof of the theorem above implies, in fact, that if $M = (m(n))_{n \in \Z}$ is a multiplier from $H^1 (\T)$ to $L^2 (\T)$, then
$$ \Big( \sum_{n \in \Z} |m(n) \widehat{f} (n)|^2 \Big)^{1/2} \leq C_M \big[ 1 + \int_{\T} |f| \log^{1/2} (1+ |f|) \big], $$
where $C_M>0$ is a constant that depends only on $M = (m(n))_{n \in \Z}$. 
\end{rmk}

The multiplier inclusion (\ref{mult_1d}) proved above is sharp in the sense that the Orlicz space $L \log^{1/2} L (\T)$ cannot be improved. There are several ways to see this. For instance, assume that every multiplier from $H^1(\T)$ to $L^2 (\T)$ is a multiplier from $L \log^r L (\T)$ to $L^2 (\T)$ for some $r \geq  0$. We need to show that $r \geq 1/2$. For this, let $N$ be a large positive integer to be chosen later. Let $V_{2^N} = 2 K_{2^{N+1}-1} - K_{2^N -1} $ be the de la Vall\'{e}e Poussin kernel of order $2^N$, where $K_n$ denotes the Fej\'er kernel of  order $n$. Since for every $n \in \N$ one has $\| K_n \|_{L^1 (\T) } = 1$ and $\| K_n \|_{L^{\infty} (\T)} \lesssim n$, we obtain
$$ \int_{\T} |V_{2^N} (\theta)| \log^r (1+ |V_{2^N} (\theta)|) d \theta \lesssim N^r . $$
Take $M = (m(n))_{n \in \Z}$ to be $m(n)=1/\sqrt{|n|}$ for $n \neq 0$ and $m(0)=0$. Then $M $ satisfies (\ref{cond_1d}) and so, it is a multiplier from $H^1 (\T)$ to $L^2 (\T)$. Hence, we necessarily have that
$$ \Big( \sum_{n \in \Z} |m (n) \widehat{V_{2^N}}(n) |^2 \Big)^{1/2} \lesssim N^r .$$
Since $\widehat{V_{2^N}} (n)= 1$ for each $|n| \leq 2^N + 1$, we have
$$ \Big( \sum_{n \in \Z} |m (n) \widehat{V_{2^N}}(n) |^2 \Big)^{1/2} \geq \Big( \sum_{1 \leq n \leq 2^N} \frac{1}{n} \Big)^{1/2} \sim N^{1/2}. $$
Therefore, if we choose $N$ to be large enough, we deduce that we must have $r \geq 1/2$.

As a corollary of Theorem \ref{mult_inclusion1}, we obtain the following inequality.
\begin{corollary} There are absolute constants $A$ and $B$ such that
$$ \Big( \sum_{ n \neq 0 } \frac{ |\widehat{f} (n) |^2 } {|n|} \Big)^{1/2} \leq A \int_{\T} | f (\theta) | \log^{1/2}(1+ | f(\theta) | )  d \theta + B.   $$
\end{corollary}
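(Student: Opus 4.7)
The plan is to deduce the corollary by applying Theorem \ref{mult_inclusion1}, or rather its quantitative strengthening stated in the remark immediately following, to the particular multiplier $M = (m(n))_{n \in \Z}$ defined by $m(n) = 1/\sqrt{|n|}$ for $n \neq 0$ and $m(0) = 0$.

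First I would check that $M$ satisfies the Hardy--Littlewood--Duren--Shields condition (\ref{cond_1d}). This is immediate since for every $N \in \N$,
\begin{equation*}
\sum_{N \leq |n| \leq 2N} |m(n)|^2 = \sum_{N \leq |n| \leq 2N} \frac{1}{|n|} \leq \frac{2(N+1)}{N} \lesssim 1,
\end{equation*}
so $M \in \mathcal{M}_{H^1 (\T) \rightarrow L^2 (\T)}$ by the characterisation recalled at the start of the proof of Theorem \ref{mult_inclusion1}.

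Next, I would invoke the quantitative form of Theorem \ref{mult_inclusion1} recorded in the remark above: because the lacunary subsequences $\Lambda_1, \Lambda_2$ extracted in the proof of (\ref{mult_1d}) always satisfy $\rho_{\Lambda_i} \geq 2$, the Zygmund constants $A_{\Lambda_i}, B_{\Lambda_i}$ in (\ref{classicalZygmund}) can be chosen absolute, and the argument yields
\begin{equation*}
\Big( \sum_{n \in \Z} |m(n) \widehat{f}(n)|^2 \Big)^{1/2} \leq C_M \Big[ 1 + \int_{\T} |f(\theta)| \log^{1/2}(1+|f(\theta)|)\, d\theta \Big]
\end{equation*}
with $C_M$ depending only on the quantity $A_M = \sup_{k} \sum_{2^k-1 \leq |n| \leq 2^{k+1}-2} |m(n)|^2$. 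For our choice of $m$ this quantity is an absolute constant by the computation above, and substituting $m(n) = 1/\sqrt{|n|}$ on the left-hand side gives exactly
\begin{equation*}
\Big( \sum_{n \neq 0} \frac{|\widehat{f}(n)|^2}{|n|} \Big)^{1/2} \leq A \int_{\T} |f(\theta)| \log^{1/2}(1+|f(\theta)|)\, d\theta + B
\end{equation*}
for absolute constants $A, B > 0$.

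There is essentially no obstacle here: all the work has been done in Theorem \ref{mult_inclusion1} and the following remark, and the corollary is a transparent specialisation to the concrete weight $1/\sqrt{|n|}$. The only point requiring a moment's care is to ensure that the constant $C_M$ in the remark depends only on $A_M$ (and not on any other feature of $m$), so that it is indeed absolute for our chosen multiplier; this follows by inspection of the proof, where the only place $m$ enters is through the factor $A_M^{1/2}$ multiplying the Zygmund estimate for the two lacunary subsequences.
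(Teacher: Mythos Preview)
Your proof is correct and follows exactly the approach intended in the paper: the corollary is obtained by applying the quantitative form of Theorem \ref{mult_inclusion1} recorded in the preceding remark to the specific multiplier $m(n) = 1/\sqrt{|n|}$, after checking that this multiplier satisfies condition (\ref{cond_1d}). The paper does not write out a separate proof for the corollary, but the verification that $m(n)=1/\sqrt{|n|}$ belongs to $\mathcal{M}_{H^1(\T)\to L^2(\T)}$ appears explicitly in the sharpness discussion immediately preceding the corollary, so your argument is precisely the one the author has in mind.
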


\subsection{An example of a multiplier from $L \log^{1/2}L (\T)$ to $L^2 (\T)$ which is not a Sidon weight}\label{nonSidon}

Consider the  bounded sequence $M=(m(n))_{n \in \Z}$ given by $m(n)= 1/\sqrt{|n|} $ for $n \neq 0$ and $m(0) =0$. If we take $0< \gamma <1$ and $c \in ( (\gamma+1)/2 , 1]$, then the series
$$\sum_{n \geq 2} \frac{ e^{i 2 \pi n ( \log n)^{\gamma}} }{ n^{1/2} (\log n)^c } e^{i 2\pi n x}$$ 
converges uniformly to some $f \in C(\T)$, see \cite{Ingham}. However, since $0<c \leq 1$,
$$ \sum_{n \in \Z} |m(n) \widehat{f}(n)| =\sum_{n \geq 2} \frac{1}{ n ( \log n)^c } = \infty$$
and hence,  $M=(m(n))_{n \in \Z}$ cannot be a Sidon weight. See also \cite{Paley_counterexamples} and \cite{Paley_note}.

\section{Variants of Zygmund's inequality on the real line}\label{real_line}
Our goal in this section is to prove a real-line analogue of the multiplier inclusion theorem presented in the previous section. In order to state our main result, we need to revisit Paley's inequality for functions defined on $\R$ first. Then, by using a result of Tao and Wright on a Littlewood-Paley inequality for compactly supported functions in $L \log^{1/2} L$ with mean zero, we show that essentially all non-negative measures satisfying Paley's inequality on $\R$ also satisfy a version of Zygmund's inequality for functions supported on compact sets in the real line.
\subsection{Variants of Paley's inequality on $\R$} To formulate our main result on a real-line version of Zygmund's inequality, we first examine variants of Paley's inequality on $\R$. Characterisations of the classes of multipliers from $H^p (\R)$ to $L^q (\R)$ for $0<p\leq q<\infty $ are well-known, see \cite{McCall}. However, as we will see in the next paragraph, it is more natural to state our variant of Zygmund's inequality on $\R$ in terms of measures. Hence, in this paragraph, we also study versions of Paley's inequality with respect to non-negative measures on $\R$.

\begin{definition}[Paley measures]
A non-negative measure $\mu$ on the real line is said to be a Paley measure, if and only if, 
$$\sup_{I \in \mathcal{I}} \mu(I) < \infty . $$
\end{definition}

\begin{proposition}\label{1d_Paley}
A non-negative measure $\mu$ on $\R$ satisfies
\begin{equation}\label{wPaley} 
 \| \widehat{f} \|_{L^2 (d\mu)} \leq C_{\mu} \| f \|_{H^1 (\R )} 
\end{equation}
if and only if, $\mu$ is a Paley measure. 
\end{proposition}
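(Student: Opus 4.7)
I would prove the two implications separately.

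\emph{Sufficiency.} Given that $\mu$ is a Paley measure with $\kappa_\mu := \sup_{I \in \mathcal{I}} \mu(I) < \infty$, I would take $f \in H^1(\R)$ and decompose its Fourier transform via the Littlewood--Paley pieces of Section \ref{Hardy_intro}: $\widehat{f}(\xi) = \sum_j \widehat{\Delta_j f}(\xi)$. This holds pointwise on $\R \setminus \{0\}$ because the $\eta(2^{-j}\cdot)$ form a partition of unity there, and extends to all of $\R$ since $\widehat{f}(0) = 0$ for every $f \in H^1(\R)$ (as follows from the continuity of $\widehat{Hf}$). The support property of $\eta$ ensures that on any fixed $I_k \in \mathcal{I}$ at most two pieces, $\widehat{\Delta_{k-1}f}$ and $\widehat{\Delta_k f}$, are nonzero, giving the pointwise bound
\begin{equation*}
|\widehat{f}(\xi)|^2 \leq 2\bigl(|\widehat{\Delta_{k-1}f}(\xi)|^2 + |\widehat{\Delta_k f}(\xi)|^2\bigr) \qquad (\xi \in I_k).
\end{equation*}
Integrating against $d\mu$, using $\mu(I_k) \leq \kappa_\mu$ and $\|\widehat{\Delta_j f}\|_{L^\infty} \leq \|\Delta_j f\|_{L^1(\R)}$, and summing in $k$ yields
\begin{equation*}
\int_\R |\widehat{f}|^2 \, d\mu \lesssim \kappa_\mu \sum_j \|\Delta_j f\|_{L^1(\R)}^2.
\end{equation*}
Finally, Minkowski's integral inequality (with $\ell^2_j$ outside and $L^1_x$ inside) gives
\begin{equation*}
\Big(\sum_j \|\Delta_j f\|_{L^1(\R)}^2\Big)^{1/2} \leq \int_\R \Big(\sum_j |\Delta_j f(x)|^2\Big)^{1/2} dx = \|S(f)\|_{L^1(\R)} \sim \|f\|_{H^1(\R)},
\end{equation*}
using the square-function characterisation of $H^1(\R)$ in Section \ref{Hardy_intro}.

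\emph{Necessity.} Conversely, assuming (\ref{wPaley}) holds, I would test it on a well-chosen family. Fix a Schwartz function $\psi$ with $\widehat{\psi}$ supported in $[-2,2]$ and $\widehat{\psi} \geq 1/2$ on $[-1,1]$. For the positive dyadic interval $I_k^+ = [2^k, 2^{k+1})$ with centre $c_k = 3 \cdot 2^{k-1}$, set $f_k(x) := 2^{k-1}\psi(2^{k-1}x)\,e^{2\pi i c_k x}$, so that $\widehat{f_k}(\xi) = \widehat{\psi}((\xi - c_k)/2^{k-1})$ is supported in $[2^{k-1}, 5 \cdot 2^{k-1}] \subset (0,\infty)$ and $|\widehat{f_k}| \geq 1/2$ throughout $I_k^+$. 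Since the spectrum of $f_k$ lies in the positive half-line, $H f_k = -i f_k$, and hence $\|f_k\|_{H^1(\R)} = 2\|\psi\|_{L^1(\R)}$ uniformly in $k$. Substituting $f_k$ into (\ref{wPaley}) yields $\mu(I_k^+) \leq 16\, C_\mu^2 \|\psi\|_{L^1(\R)}^2$; the mirror construction with $c_k = -3 \cdot 2^{k-1}$ handles $I_k^- = [-2^{k+1}, -2^k)$, so $\mu$ is a Paley measure.

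The only nontrivial point I would expect to have to think about is the summability step in the sufficiency direction: although the single-band estimate $\int_{I_k}|\widehat{f}|^2 d\mu \lesssim \kappa_\mu \|\Delta_k f\|_{L^1}^2$ looks wasteful on each individual annulus, the resulting $\ell^2$-sum of $L^1$-norms is controlled by $\|f\|_{H^1(\R)}^2$ \emph{precisely} because Minkowski's integral inequality moves the $\ell^2$ inside the spatial integral in the right direction; everything else reduces to standard Fourier analysis on $\R$ together with the Littlewood--Paley theory of $H^1(\R)$ recalled in Section \ref{Hardy_intro}.
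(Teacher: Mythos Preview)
Your sufficiency argument is essentially identical to the paper's: decompose $\R\setminus\{0\}$ into dyadic annuli, use that only $\Delta_{k-1}f$ and $\Delta_k f$ contribute on $I_k$, bound $\|\widehat{\Delta_j f}\|_{L^\infty}$ by $\|\Delta_j f\|_{L^1}$, and finish with Minkowski's integral inequality and the square-function characterisation of $H^1(\R)$.

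Your necessity argument is correct but genuinely different from the paper's. The paper argues by contrapositive: assuming $\sup_k \mu(I_k)=\infty$, it adapts Rudin's construction, choosing a rapidly increasing subsequence $(k_j)$ with $\mu(I_{k_j})\ge j^4$ and building a single function $f=\sum_j j^{-2}\widecheck{\eta_{k_j}}\in H^1(\R)$ for which $\|\widehat{f}\|_{L^2(d\mu)}=\infty$. You instead test the inequality directly on modulated dilates $f_k$ whose Fourier transforms are large on $I_k^\pm$ and whose $H^1$-norms are uniformly bounded (using that a function with spectrum in a half-line satisfies $Hf=\mp i f$), reading off $\mu(I_k^\pm)\lesssim C_\mu^2$ immediately. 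Your route is shorter and more elementary, and it gives the quantitative bound $\sup_{I\in\mathcal{I}}\mu(I)\lesssim C_\mu^2$ for free; the paper's construction, on the other hand, is closer in spirit to the classical argument of Rudin in the periodic setting and exhibits an explicit $H^1$ function witnessing the failure of the inequality.
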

\begin{proof}  Assume first that $\mu$ is a Paley measure. To prove that $\mu$ satisfies (\ref{wPaley}), consider the set $I_k = (-2^{k+1}, -2^k] \cup [2^k,2^{k+1})$ for $k\in \Z$ and write
$$ \int_{\R} |\widehat{f} (\xi)|^2 d \mu(\xi) = \sum_{k \in \Z} \int_{I_k} |\widehat{f}(\xi)|^2 d\mu(\xi). $$
Note that for every $\xi \in I_k $ one has
\begin{align*} |\widehat{f} (\xi)| \leq 2 \big[ |\eta (2^{-k+1}\xi) \widehat{f} (\xi)| + |\eta(2^{-k} \xi) \widehat{f} (\xi)| \big] &\leq 2 \big[ \| \widehat{\Delta_{k-1} (f)} \|_{L^\infty (\R)} + \| \widehat{\Delta_k (f)} \|_{L^{\infty} (\R)} \big] \\
&\leq 2 \big[ \|\Delta_{k-1}(f) \|_{L^1 (\R)} + \| \Delta_k (f) \|_{L^1 (\R)} \big],
\end{align*}
where $\eta$ and $\Delta_k$ are as in section \ref{Hardy_intro}. Therefore,
$$ \int_{\R} |\widehat{f} (\xi)|^2 d \mu(\xi) \leq 8 \sum_{k \in \Z} \mu(I_k) \big( \| \Delta_{k-1}(f) \|^2_{L^1 (\R)} + \| \Delta_k(f) \|^2_{L^1(\R)} \big) .  
$$
Hence, by using our assumption that $\mu$ is a Paley measure, it follows that
$$ \Big( \int_{\R} |\widehat{f} (\xi)|^2 d \mu(\xi) \Big)^{1/2} \lesssim \big[ \sup_{I \in  \mathcal{I}} \mu(I) \big]^{1/2} \Big(  \sum_{k \in \Z}  \| \Delta_k(f) \|^2_{L^1(\R)} \Big)^{1/2} .  
$$
By Minkowski's integral inequality and the square function characterisation of $H^1 (\R)$, we deduce that
$$ \Big( \int_{\R} |\widehat{f} (\xi)|^2 d \mu(\xi) \Big)^{1/2} \lesssim_{\mu} \| f \|_{H^1 (\R)}, $$
as desired. An analogous argument was used in the proof of \cite[Theorem 1]{Oberlin}.

For the opposite direction, we shall adapt a construction of Rudin \cite{Rudin_Paley} to the euclidean setting. More precisely, suppose that $\mu$ is a non-negative measure that is not a Paley measure, namely
$$ \sup_{k \in \Z} \mu(I_k) = \infty, $$
where $I_k$ is as above. In such a case, either there exists an increasing subsequence $(k_j )_{j \in \N}$ in $\N$ such that $\mu(I_{k_j}) \rightarrow \infty$ or there exists a decreasing subsequence $(k_j )_{j \in \N}$ of negative integers such that $\mu(I_{k_j}) \rightarrow \infty$. Without loss of generality, we may assume that we have an increasing subsequence $(k_j)_{j\in \N} $ in $\N$ with $\mu(I_{k_j}) \rightarrow \infty$, and passing to a further subsequence if necessary, we may assume that $ k_{j+1} > 5 k_j $ and $\mu(I_{k_j}) \geq j^4$. Consider the function
$$  f(x) = \sum_{ j \in \N } \frac{1} {j^2}  \widecheck{\eta_{k_j}}(x), $$
where $\eta_k (\xi) = \eta(2^{-k} \xi)$, $\eta$ being as in section \ref{Hardy_intro}. Since $\| \widecheck{\eta_k} \|_{H^1 (\R)} \lesssim 1$, where the implied constant does not depend on $k$, we see that $f \in H^1(\R)$. For every $j \in \N$, we have 
$$ \int_{I_{k_j}} |\widehat{f} (\xi) |^2 d \mu(\xi) \gtrsim  j^{-4} \mu(I_{k_j}) $$ 
and therefore $  \|  \widehat{f} \|_{L^2 (d \mu)} = \infty$, completing the proof of the proposition. 
\end{proof}
\begin{rmk}
Since every function $m \in L^{\infty} (\R)$ induces an absolutely continuous, non-negative measure $\mu$ on $\R^n$ given by $d \mu (\xi) = |m(\xi)|^2 d \xi$, one deduces that $m \in L^{\infty} (\R)$ is a multiplier from $H^1 (\R)$ to $L^2 (\R)$ if and only if, $\sup_{I \in \mathcal{I}} \int_I |m(\xi)|^2 d \xi < \infty$. We thus recover \cite[Theorem A]{McCall} for the case where $p=1$ and $q=2$. Moreover, our method is different than the one used in \cite{McCall}.
\end{rmk}
We remark that the argument presented above can be adapted to the multi-parameter case in a straightforward way. We thus obtain the euclidean analogue of \cite[Theorem 1]{Oberlin}.

\begin{proposition}\label{higher_Paley}
A non-negative measure $\mu$ on $\R^n$ satisfies
$$  \| \widehat{f} \|_{L^2 (d\mu)} \leq C_{\mu,n} \| f \|_{H^1_{\mathrm{prod}} (\R^n )}  $$
if and only if, $ \sup_{I_1, \cdots, I_n \in \mathcal{I}} \mu (I_1 \times \cdots \times I_n) < \infty$.
\end{proposition}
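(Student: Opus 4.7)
The plan is to mirror the one-parameter proof of Proposition \ref{1d_Paley}, replacing the dyadic intervals $I_k \in \mathcal{I}$ by product boxes $I_{k_1} \times \cdots \times I_{k_n}$ and the square function characterisation of $H^1(\R)$ by the tensor-product (Chang--Fefferman) characterisation of $H^1_{\mathrm{prod}}(\R^n)$. Throughout, let $\Delta_{\vec{k}} = \Delta_{k_1}^{(1)} \cdots \Delta_{k_n}^{(n)}$ denote the iterated Littlewood--Paley projection built from the bump $\eta$ of section \ref{Hardy_intro} acting in each coordinate, so that $\|f\|_{H^1_{\mathrm{prod}}(\R^n)} \sim \bigl\|\bigl(\sum_{\vec{k} \in \Z^n} |\Delta_{\vec{k}} f|^2\bigr)^{1/2}\bigr\|_{L^1(\R^n)}$.

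For the sufficient direction, assuming $K_{\mu} = \sup_{I_1, \ldots, I_n \in \mathcal{I}} \mu(I_1 \times \cdots \times I_n) < \infty$, I would split $\int_{\R^n} |\widehat{f}(\xi)|^2 d\mu(\xi)$ as a sum over the product boxes $I_{\vec{k}} = I_{k_1} \times \cdots \times I_{k_n}$ and estimate, exactly as in the one-dimensional argument, $|\widehat{f}(\xi)| \lesssim \sum_{\vec{\epsilon} \in \{0,-1\}^n} \|\Delta_{\vec{k}+\vec{\epsilon}} f\|_{L^1(\R^n)}$ pointwise on $I_{\vec{k}}$, using that the shifted bumps $\eta(2^{-k_j + \epsilon_j}\xi_j)$ cover the set $I_{k_j}$ coordinatewise. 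Feeding this into the integral bounds it by $K_{\mu} \sum_{\vec{k}} \|\Delta_{\vec{k}} f\|_{L^1(\R^n)}^2$ up to a constant depending only on $n$. Minkowski's integral inequality and the tensor-product square function characterisation then yield $\|\widehat{f}\|_{L^2(d\mu)} \lesssim_{\mu,n} \|f\|_{H^1_{\mathrm{prod}}(\R^n)}$.

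For the converse, I would adapt the Rudin-type construction from Proposition \ref{1d_Paley}. If $\sup_{\vec{k} \in \Z^n} \mu(I_{\vec{k}}) = \infty$, extract a sequence of multi-indices $\vec{k}^{(j)} \in \Z^n$ with $\mu(I_{\vec{k}^{(j)}}) \geq j^4$, and, by passing to a subsequence, arrange that in at least one coordinate the indices $k_i^{(j)}$ are sufficiently separated (for instance, $k_i^{(j+1)} > 5|k_i^{(j)}|$) so that the frequency supports of the tensorised bumps $\eta_{\vec{k}^{(j)}}(\xi) = \prod_{i=1}^n \eta(2^{-k_i^{(j)}}\xi_i)$ are pairwise disjoint. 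Setting $f(x) = \sum_{j \in \N} j^{-2} \widecheck{\eta_{\vec{k}^{(j)}}}(x)$, the uniform bound $\|\widecheck{\eta_{\vec{k}}}\|_{H^1_{\mathrm{prod}}(\R^n)} \lesssim 1$ together with disjoint frequency supports yield $f \in H^1_{\mathrm{prod}}(\R^n)$, while $\widehat{f}(\xi) \gtrsim j^{-2}$ on a subset of $I_{\vec{k}^{(j)}}$ of $\mu$-measure $\gtrsim \mu(I_{\vec{k}^{(j)}}) \geq j^4$, so $\|\widehat{f}\|_{L^2(d\mu)}^2 \gtrsim \sum_j j^{-4}\mu(I_{\vec{k}^{(j)}}) = \infty$, contradicting (\ref{wPaley}).

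The main obstacle is the verification that $f$ lies in $H^1_{\mathrm{prod}}(\R^n)$: in the one-parameter proof this follows because $\widecheck{\eta_k}$ is essentially a translate/dilate of a fixed Schwartz atom with uniformly bounded $H^1(\R)$ norm, but in the product setting one must control the product square function of a sum of tensor products indexed by $\vec{k}^{(j)}$. The disjointness of frequency supports reduces the product square function of the sum to the $\ell^2$-sum of individual product square functions, and each individual term has uniformly bounded $\|\cdot\|_{H^1_{\mathrm{prod}}(\R^n)}$-norm since its tensor structure gives $\|\widecheck{\eta_{\vec{k}}}\|_{H^1_{\mathrm{prod}}(\R^n)} \lesssim \prod_{i=1}^n \|\widecheck{\eta_{k_i}}\|_{H^1(\R)} \lesssim 1$; summability then follows from $\sum_j j^{-2} < \infty$ via Minkowski.
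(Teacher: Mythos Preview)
Your proposal is precisely the multi-parameter adaptation of Proposition~\ref{1d_Paley} that the paper intends; the paper gives no separate proof of Proposition~\ref{higher_Paley} but simply remarks that the one-dimensional argument carries over, and your write-up is exactly that transfer.

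One small point to watch in the converse direction: with the trapezoidal bump $\eta$ of section~\ref{Hardy_intro}, the tensor $\eta_{\vec{k}^{(j)}}$ vanishes near the boundary of the box $I_{\vec{k}^{(j)}}$, so the claim ``$\widehat f\gtrsim j^{-2}$ on a subset of $I_{\vec{k}^{(j)}}$ of $\mu$-measure $\gtrsim\mu(I_{\vec{k}^{(j)}})$'' is not automatic for a general measure $\mu$ (the set where $\eta_{\vec{k}^{(j)}}\gtrsim 1$ has comparable \emph{Lebesgue} measure, not necessarily comparable $\mu$-measure). The easy fix is to build $f$ from a slightly wider non-negative Schwartz bump $\tilde\eta$ that is $\equiv 1$ on all of $\pm[1,2]$, so that $\tilde\eta_{\vec{k}^{(j)}}\equiv 1$ on the entire box and hence $|\widehat f|\ge j^{-2}$ throughout $I_{\vec{k}^{(j)}}$; one still has $\|\widecheck{\tilde\eta_{\vec k}}\|_{H^1_{\mathrm{prod}}(\R^n)}\lesssim 1$. (The same adjustment is implicit in the paper's one-dimensional argument.) Finally, note that the ``main obstacle'' you flag is not really one: membership $f\in H^1_{\mathrm{prod}}(\R^n)$ follows directly from the triangle inequality together with the uniform bound $\|\widecheck{\eta_{\vec k}}\|_{H^1_{\mathrm{prod}}}\lesssim 1$ and $\sum_j j^{-2}<\infty$; the frequency disjointness is needed only to identify $\widehat f$ on $I_{\vec{k}^{(j)}}$ for the lower bound, not to control $\|f\|_{H^1_{\mathrm{prod}}}$.
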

\subsection{A real-line version of Zygmund's inequality}In the previous paragraph we obtained a real-line version of Paley's inequality based on the square function characterisation of $H^1 (\R)$. A similar argument can be used for compactly supported functions in $L \log^{1/2 } L$ with zero mean thanks to the following deep result of Tao and Wright \cite[Proposition 4.1]{TW}.
\begin{theorem*}[Tao and Wright] 
Let $K \subset \R$ be a compact set. Let $f$ be a function in $L \log^{1/2} L  (K)$ with zero integral. 

Then for every $k \in \Z$ there exists a non-negative function $F_k$ such that
$$ |\Delta_k (f) (x)| \lesssim F_k \ast \phi_k (x) $$
for all $x \in \R$ and
$$ \big\| \big( \sum_{k \in \Z } |F_k |^2 \big)^{1/2} \big\|_{L^1 (\R)} \leq A_K \big[ \int_K |f (x)|  \log^{1/2} (1 + |f(x)|) dx + 1 \big].$$
Here $\phi_k (x) = 2^k (1 + 2^{2k} |x|^2)^{-3/4}$.
\end{theorem*}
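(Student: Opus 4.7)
The plan is to establish the estimate by means of a Calder\'on--Zygmund atomic decomposition of $f$, calibrated to the Orlicz scale $L\log^{1/2} L$, followed by pointwise bounds for $\Delta_k$ applied to each atom. The philosophy is that, although the Littlewood--Paley square function fails to be bounded on $L^1$, the combination of mean zero and compact support brings $f$ close enough to an $H^1$-atom-like object that the failure can be measured by a single logarithmic loss, whose square-root exponent comes from pairing an $\ell^2$-orthogonality in $k$ with an $L^1$-integration in $x$.

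First, for each $j \in \Z$ set $\Omega_j = \{Mf > 2^j\}$, where $M$ denotes the Hardy--Littlewood maximal operator, and decompose $\Omega_j = \bigsqcup_{l} I_{j,l}$ into its maximal open intervals. A standard Calder\'on--Zygmund construction, together with $\int f = 0$ and compact support of $f$, produces an atomic decomposition $f = \sum_{(j,l)} c_{j,l}\,a_{j,l}$ in which each $a_{j,l}$ is supported on $I_{j,l}$, has mean zero, and satisfies $\|a_{j,l}\|_{\infty} \lesssim |I_{j,l}|^{-1}$, with $|c_{j,l}| \lesssim 2^j |I_{j,l}|$. A layer-cake computation then relates the Orlicz integral $\int_K |f|\log^{1/2}(1+|f|)$ to the quantity $\sum_{(j,l)} 2^j j^{1/2}\,|I_{j,l}|$.

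Second, for each atom $a = a_{j,l}$ on $I = I_{j,l}$, let $k_I$ be the integer with $2^{-k_I}\sim |I|$. Using the cancellation $\int a = 0$ (to exploit the smoothness of the kernel of $\Delta_k$ when $k \le k_I$) and the decay of this kernel at scale $2^{-k}$ (when $k > k_I$), one obtains a pointwise bound of the form
$$ |\Delta_k a(x)| \lesssim \|a\|_{\infty}\,2^{-\alpha|k-k_I|}\,\bigl(\chi_{I}\ast \phi_k\bigr)(x) $$
for some fixed $\alpha>0$. Summing over the atoms gives the desired pointwise majorisation
$$ |\Delta_k f(x)| \lesssim (F_k \ast \phi_k)(x),\qquad F_k := \sum_{(j,l)} |c_{j,l}|\,\|a_{j,l}\|_\infty\,2^{-\alpha|k-k_{I_{j,l}}|}\,\chi_{I_{j,l}}. $$
The control on $\bigl\|(\sum_k F_k^2)^{1/2}\bigr\|_{L^1(\R)}$ is then obtained by exploiting that the intervals containing a given point $x$ are nested through levels $j$ and their scales $k_{I_{j,l}}$ are thereby separated; a Cauchy--Schwarz in $k$ against the weights $2^{-\alpha|k-k_{I_{j,l}}|}$ collapses the $k$-sum to a diagonal sum over the atom intervals, and integrating in $x$ balances an $\ell^2$-sum on the $j$-side against the $L^1$-integration.

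The main obstacle is the last step, where the exponent $1/2$ of the logarithm must be pinned down. The $\log^{1/2}$ factor is forced by matching the $\ell^2$-structure of the square function in $k$ with the $L^1$-norm in $x$; obtaining precisely this exponent, rather than the softer $\log L$ bound that a naive triangle inequality would give, requires careful bookkeeping of how many atoms at different levels $j$ overlap at a given point and at a given scale. This tight balance of frequency-orthogonality against spatial overlap is, I expect, the technical heart of the Tao--Wright argument.
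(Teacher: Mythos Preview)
The paper does not contain a proof of this statement. It is quoted verbatim as a result of Tao and Wright \cite[Proposition 4.1]{TW} and then used as a black box in the proof of Theorem~\ref{weighted_Zygmund}. There is therefore no ``paper's own proof'' to compare your proposal against.

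For what it is worth, your sketch is broadly in the spirit of the actual Tao--Wright argument: a Calder\'on--Zygmund-type decomposition at all heights, pointwise control of $\Delta_k$ on each atom with geometric decay in $|k-k_I|$, and a careful accounting that turns the $\ell^2$-in-$k$ / $L^1$-in-$x$ pairing into the $\log^{1/2}$ exponent. But you should be aware that the step you flag as ``the main obstacle'' is genuinely the hard part, and your outline does not yet contain the mechanism that produces exactly $1/2$ rather than $1$. A naive Cauchy--Schwarz in $j$ at each point $x$ gives a factor of (number of levels $j$ with $Mf(x)>2^j$)$^{1/2}$, which is $\sim (\log Mf(x))^{1/2}$, and integrating this against $|f|$ does \emph{not} immediately give $\int |f|\log^{1/2}(1+|f|)$; one needs a more delicate argument (in Tao--Wright, a combinatorial lemma organising the atoms) to close the estimate. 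So your proposal identifies the right architecture but stops short of the key technical input.
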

 
We are now ready to establish a real-line analogue of Theorem \ref{mult_inclusion1}.

\begin{thm}[Weighted Zygmund's inequality on $\R$]\label{weighted_Zygmund}  Let $\mu$ be a Paley measure such that $\mu([-\delta, \delta]) = 0$ for some $\delta > 0$. 

For every compact set $K \subset \R$ there is a constant $C = C (\mu, K)>0$ such that whenever $\mathrm{supp}(f) \subset K$ one has
\begin{equation}\label{eq:wZyg}
\| \widehat{f} \|_{L^2 (d \mu)} \leq C (\mu, K) \big[ \int_K |f(x)| \log^{1/2} (1 + |f(x)|)   dx + 1 \big] .
\end{equation} 
\end{thm}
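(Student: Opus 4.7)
The plan is to mirror the proof of Proposition~\ref{1d_Paley} dyadically, replacing the square-function characterisation of $H^1(\R)$ by the Tao--Wright pointwise majorant available for mean-zero $L\log^{1/2}L$ functions. Since Tao--Wright requires the integral to vanish, the hypothesis $\mu([-\delta,\delta])=0$ becomes essential: the mean of $f$ is a low-frequency artefact that $\mu$ does not see, and it can therefore be peeled off without affecting $\|\widehat f\|_{L^2(d\mu)}$ in a substantive way. Concretely, set $c=|K|^{-1}\int_K f$ and write $f=f_0+c\chi_K$, so that $f_0$ is supported in $K$ with zero mean (the case $|K|=0$ is vacuous).

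For the ``constant'' piece $c\chi_K$, I would show $\|\widehat{\chi_K}\|_{L^2(d\mu)}\lesssim_{\mu,K,\delta}1$ as follows. Fix a $C^\infty_c$ bump $\psi$ with $\int\psi=1$; the function $\chi_K-|K|\psi$ lies in $L^\infty$ with compact support and zero mean, so its Hilbert transform is $O(|x|^{-2})$ at infinity and hence integrable, placing $\chi_K-|K|\psi$ in $H^1(\R)$. Proposition~\ref{1d_Paley} then bounds $\|\widehat{\chi_K-|K|\psi}\|_{L^2(d\mu)}$ via the $H^1$-norm, while the Schwartz piece $|K|\widehat\psi$ is controlled by its rapid decay against $\mu(I)\leq C_\mu$ for $I\in\mathcal I$ together with the gap at the origin. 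Since $|c|\leq |K|^{-1}\|f\|_{L^1(K)}\lesssim_K 1+\int_K|f|\log^{1/2}(1+|f|)\,dx$ by a standard splitting at the level $|f|=1$, the total contribution of this piece is of the required size.

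For $f_0$ I run the dyadic argument of Proposition~\ref{1d_Paley} verbatim: the partition of unity formed by $\eta_{k-1}+\eta_k$ on $I_k$ yields $|\widehat{f_0}(\xi)|\leq\|\Delta_{k-1}(f_0)\|_{L^1(\R)}+\|\Delta_k(f_0)\|_{L^1(\R)}$ for $\xi\in I_k$, so that
\[
\|\widehat{f_0}\|_{L^2(d\mu)}^2 \;\lesssim\; C_\mu \sum_{k\in\Z}\|\Delta_k(f_0)\|_{L^1(\R)}^2.
\]
Tao and Wright's theorem provides $|\Delta_k(f_0)|\lesssim F_k\ast\phi_k$, and the substitution $u=2^kx$ shows $\|\phi_k\|_{L^1(\R)}=\int_\R(1+u^2)^{-3/4}\,du$ is a finite constant independent of $k$. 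Hence Young's convolution inequality gives $\|\Delta_k(f_0)\|_{L^1}\lesssim\|F_k\|_{L^1}$, and the standard embedding $\ell^2(L^1)\subset L^1(\ell^2)$ together with Tao--Wright produces
\[
\Big(\sum_k\|F_k\|_{L^1}^2\Big)^{1/2} \;\leq\; \Big\|\Big(\sum_k|F_k|^2\Big)^{1/2}\Big\|_{L^1(\R)} \;\lesssim_K\; \int_K |f_0|\log^{1/2}(1+|f_0|)\,dx+1.
\]

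What remains is to bound $\int_K |f_0|\log^{1/2}(1+|f_0|)\,dx$ by a multiple of $\int_K|f|\log^{1/2}(1+|f|)\,dx$, and this is the principal obstacle, since a naive estimate on $|c|\log^{1/2}(1+|c|)$ introduces an unwanted $\log^{1/2}$ factor. The resolution is that $\Phi(t)=t\log^{1/2}(1+t)$ is convex on $[0,\infty)$ and satisfies the $\Delta_2$ condition $\Phi(2t)\lesssim\Phi(t)$, so $\Phi(|f|+|c|)\lesssim\Phi(|f|)+\Phi(|c|)$ pointwise, while Jensen's inequality applied to the convex $\Phi$ yields $|K|\Phi(|c|)\leq|K|\Phi(\bar f)\leq\int_K\Phi(|f|)$, where $\bar f=|K|^{-1}\int_K|f|\geq|c|$. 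Chaining all the estimates produces \eqref{eq:wZyg}.
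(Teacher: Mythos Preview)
Your proof is correct and follows the paper's approach essentially verbatim: reduce to the mean-zero case, apply the Tao--Wright majorant $|\Delta_k(f_0)|\lesssim F_k\ast\phi_k$, and feed $\|\Delta_k(f_0)\|_{L^1}\lesssim\|F_k\|_{L^1}$ into the dyadic Paley-measure estimate exactly as in Proposition~\ref{1d_Paley}. The only difference is in the bookkeeping of the mean-removal step: the paper subtracts $I\psi$ with $\psi\in C_c^\infty(K)$, $\int\psi=1$, and normalises so that $\int_K|f|\log^{1/2}(1+|f|)\le 1$, which makes both $\|\widehat\psi\|_{L^2(d\mu)}$ and $\int_K|g|\log^{1/2}(1+|g|)$ trivially bounded; you instead subtract $c\chi_K$, which forces the small detour through $\chi_K-|K|\psi\in H^1(\R)$ and the Jensen/$\Delta_2$ argument to recover $\int_K\Phi(|f_0|)\lesssim\int_K\Phi(|f|)$. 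Both work, though the paper's choice is a bit more economical.
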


\begin{proof} Let $K \subset \R$ be a fixed compact set and let $f$ be a function supported in $K$. Assume first that $\int_K f = 0$. 

The proof of (\ref{eq:wZyg}) proceeds in the same way as the proof of Proposition \ref{wPaley}. By the aforementioned result of Tao and Wright, for each $k \in \Z$ there is a function $F_k$ such that $| \Delta_k (f )| \leq F_k \ast \phi_k $ and
$$  \big\| \big( \sum_{ k \in \Z } |F_k|^2 \big)^{1/2} \big\|_{L^1 (\R)} \leq A_K \big[ \int_K |f (x)|  \log^{1/2} (1 + |f(x)|) dx + 1 \big]  , $$
Since $\| \phi_k \|_{L^1 (\R)} \lesssim 1$, it follows that $ \| \Delta_k (f) \|_{L^1 (\R)} \lesssim \| F_k \|_{L^1 (\R)} $. Hence,
\begin{align*} \| \widehat{f} \|_{L^2 ( d \mu )} = \Big(\sum_{k \in \Z} \int_{I_k} |\widehat{f} (\xi)|^2 d \mu(\xi) \Big)^{1/2} &\lesssim \big[ \sup_{I \in \mathcal{I}} \mu(I) \big]^{1/2} \Big( \sum_{k \in \Z} \| F_k  \|^2_{L^1 (\R)} \Big)^{1/2} \\
&\leq \big[ \sup_{I \in \mathcal{I}} \mu(I) \big]^{1/2}   \Big\| \big( \sum_{ k \in \Z } |F_k|^2 \big)^{1/2} \Big\|_{L^1 (\R)} \\
&\lesssim_{\mu, K}   \int_K |f(x)|  \log^{1/2} (1 + |f(x)| )  dx +  1 .
\end{align*}

We now show why we can remove the condition that $f$ has mean zero when the measure $\mu$ vanishes on a neighbourhood of $0$. For our function $f$ supported in $K$, we may assume, without loss of generality, that
$$   \int_K |f(x)| \log^{1/2} (1+ |f(x)|)   dx \leq 1.$$
Hence, if we set $I = \int_K f$, then $|I| \lesssim 1$.
Consider 
$$g(x) = f(x) - I \psi(x) ,$$
where $\psi$ is a smooth function, supported in $K$ and such that $\int \psi = 1$. Then $g$ is supported in $K$, has mean zero and 
$$ \int_K |g(x)| \log^{1/2} (1+ |g(x)|)  dx \lesssim 1 .  $$
Hence, (\ref{eq:wZyg}) holds for $g$, as $\mu$ is a Paley measure. But
$$ \| \widehat{f} \|_{L^2 (d \mu) } \leq  \| \widehat{g} \|_{L^2 (d \mu) }  + |I| \| \widehat{\psi} \|_{L^2 (d \mu) } $$
and if $\mu$ vanishes in a neighbourhood of the origin, we have
$$ \int_{\R} |\widehat{\psi} (\xi) |^2 d \mu(\xi) = \int_{|\xi|  >\delta} | \widehat{\psi} (\xi) |^2 d\mu(\xi) \leq [\sup_{\xi \in \R} |\xi \widehat{\psi} (\xi)|^2 ] \sum_{ k \geq -M} 2^{-2k} \mu (I_k) \lesssim 1,$$
where the sets $I_k$ are as in the proof of Proposition \ref{1d_Paley}. Note that $M$ depends on $\delta > 0$ and the implicit constant in the last inequality also depends on $M$. Hence, the implicit constant depends on $\delta$, i.e. on $\mu$. Thus, (\ref{eq:wZyg}) also holds for $f$.
\end{proof}

\begin{rmk} Compared to weighted Paley's inequality on $\R$, in the previous theorem we imposed the extra hypothesis that $\mu$ vanishes on a neighbourhood of $0$. To see that this condition is necessary, consider the Paley measure $d\mu(\xi) = |\xi|^{-1} d\xi$ and take $f$ to be in the class $L \log^{1/2} L (K)$ with $\widehat{f}(0) = \int_{K} f \neq 0$, for some compact set $K \subset \R$. Since $\widehat{f}$ is continuous, 
$$ \| \widehat{f} \|^2_{L^2 (d \mu)} = \int_{\R} \frac{|\widehat{f} (\xi)|^2} {|\xi|} d\xi = \infty. $$
Note that for every $g \in H^1 (\R)$, one automatically has $\int_{\R} g =0$.
\end{rmk}
Note that if $\Lambda$ is a Sidon set in $\N$ that cannot be written as a finite union of lacunary sequences, then it follows by Rudin's extension of classical Zygmund's inequality that the discrete measure $\mu = \sum_{n \in \Lambda} \delta_n$ satisfies (\ref{eq:wZyg}), but it is not a Paley measure. Here, $\delta_n$ denotes the dirac measure supported on $\{n\}$. It is an interesting problem to characterise the class of all non-negative measures $\mu$ on $\R$ satisfying (\ref{eq:wZyg}).
\begin{rmk}By adapting the argument in the proof of Theorem \ref{weighted_Zygmund} to the periodic setting, one can give an alternative proof to Theorem \ref{mult_inclusion1}.
\end{rmk}
\section{Higher dimensional extensions of Zygmund's inequality using a theorem of Bonami}\label{Bonami_n}

In this section we obtain further extensions of Zygmund's inequality for spectral sets in $\Z^n$ by using a classical theorem of Bonami.

Let $n \geq 1$ be a fixed integer. It follows by duality that (\ref{well-known}), in the case where $ G_i = \T$ ($i=1,\cdots,n$), is equivalent to the fact that for every $\Lambda_1 \times \cdots \times \Lambda_n$-polynomial $f$ one has
\begin{equation}\label{equiv_n}
\| f \|_{L^p (\T^n)} \lesssim_{\Lambda_1, \cdots, \Lambda_n} p^{n/2} \|f \|_{L^2 (\T^n)}
\end{equation}
for all $p>2$, where the implied constant depends only on $\Lambda_1, \cdots, \Lambda_n$ and not on $f$, $p$. In particular, the classical inequality of Zygmund (\ref{classicalZygmund}) is equivalent to the fact that for all $p>2$, every $\Lambda$-polynomial $f$ satisfies (\ref{equiv_n}) for $n=1$.

In what follows we shall focus on the case where $\Lambda_i$ is a lacunary sequence in $\N$ with ratio $\rho_{\Lambda_i} \geq 2$, $i=1, \cdots, n$. Consider the two-dimensional case first. In order to prove (\ref{equiv_n}) (for $n=2$), a plausible idea is to try to iterate the one-dimensional result. To be more specific, to prove (\ref{equiv_n}) in the case of the two-torus ($n=2$), consider a $\Lambda_1 \times \Lambda_2$-polynomial $f$ and write
$$  f(\theta, \phi) = \sum_{m \in \Lambda_1} f_{\phi} (m) e^{i 2 \pi m \theta}, $$
where $f_{\phi} (m) = \sum_{ n \in \Lambda_2} \widehat{f} (m,n ) e^{i 2 \pi n \phi}$. Hence, fixing $\phi \in \T$, we may regard $f(\theta, \phi)$ as a $\Lambda_1$-polynomial. By using (\ref{equiv_n}) for $n=1$ (i.e. the classical Zygmund's inequality) one deduces that for all $p>2$
\begin{equation}\label{intermediate}
 \int_{\T} | \sum_{m \in \Lambda_1} f_{\phi} (m) e^{i 2 \pi m \theta} |^p d \theta \leq A_{\Lambda_1}^p p^{p/2} \big( \sum_{m \in \Lambda_1} |f_{\phi}(m)|^2  \big)^{p/2} 
\end{equation} 
for each fixed $\phi \in \T$. Observe now that
$$  \sum_{m \in \Lambda_1}  |f_{\phi}(m)|^2 =   \sum_{n,n' \in \Lambda_2} E_{n,n'} e^{i2\pi (n-n') \phi}, $$
where $E_{n,n'} = \sum_{m \in \Lambda_1} \widehat{f} (m,n) \overline{\widehat{f} (m,n')}$. Therefore, by integrating both sides of (\ref{intermediate}) with respect to $\phi \in \T$, one deduces
$$ \| f \|^p_{L^p (\T^2)} \leq A_{\Lambda_1}^p p^{p/2} \int_{\T} \big|  \sum_{n,n' \in \Lambda_2} E_{n,n'} e^{i2\pi (n-n') \phi} \big|^{p/2} d\phi.$$
Note that in the right-hand side of the last inequality we have a trigonometric polynomial on $\T$ frequency supported in the set $\{ n -n': n,n' \in \Lambda_2 \}$. As Zygmund's inequality handles only lacunary sequences, to obtain (\ref{equiv_n}) for $n=2$, one cannot just iterate Zygmund's inequality twice. However, one can surpass this difficulty by using the following classical result of Bonami \cite[Corollaire 4]{Bonami}.

\begin{theorem*}[Bonami]
Let $\Lambda = (\lambda_n)_{n \in \N}$ be a lacunary sequence of positive integers with ratio $\rho_{\Lambda} \geq  2$. For some $k \geq 1$, consider the sumset 
$$\Lambda^{(k)} := \{ \pm \lambda_{n_1} \pm  \cdots \pm \lambda_{n_k} : n_1 > \cdots > n_k \}. $$
Then, there exists a constant $A(\Lambda, k)$ such that for every $\Lambda^{(k)}$-polynomial $f$ one has $ \| f \|_{L^p (\T)} \le A(\Lambda, k) p^{k/2} \| f \|_{L^2 (\T)} $ for all $ p > 2$.
\end{theorem*}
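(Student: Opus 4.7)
The plan is to prove the statement by induction on $k \geq 1$. The base case $k = 1$ is exactly the classical Zygmund inequality (\ref{classicalZygmund}) in its equivalent form $\|f\|_{L^p(\T)} \lesssim \sqrt{p}\,\|f\|_{L^2(\T)}$ for $\Lambda$-polynomials. For the inductive step, given a $\Lambda^{(k)}$-polynomial $f$, I would peel off the largest lacunary index $n_1$ appearing in each frequency and write
$$f(\theta) \;=\; \sum_{n \geq k}\;\sum_{\epsilon \in \{\pm 1\}} e^{i 2\pi \epsilon \lambda_n \theta}\, g_{n,\epsilon}(\theta),$$
where $g_{n,\epsilon}$ is a $\Lambda^{(k-1)}$-polynomial whose Fourier support involves only indices strictly smaller than $n$. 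Because $\rho_\Lambda \geq 2$, the spectrum of $g_{n,\epsilon}$ is contained in $[-2\lambda_{n-1},\,2\lambda_{n-1}]$, so each block $e^{i 2\pi \epsilon \lambda_n \theta} g_{n,\epsilon}(\theta)$ is frequency-localized near $\epsilon \lambda_n$; after thinning $\Lambda$ into finitely many sub-lacunary sequences of sufficiently large ratio (harmless up to a multiplicative constant depending on $\rho_\Lambda$), these blocks live in pairwise disjoint, well-separated annuli.

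The heart of the argument is the following vector-valued form of Zygmund's inequality: for $p \geq 2$,
$$\|f\|_{L^p(\T)} \;\leq\; C_1(\Lambda)\,\sqrt{p}\;\Big\|\Big(\sum_{n,\epsilon} |g_{n,\epsilon}|^2\Big)^{1/2}\Big\|_{L^p(\T)}.$$
I would establish it by Rademacher randomization. Using the sign-unconditionality of the lacunary system $(e^{i 2\pi \lambda_n \theta})_n$ in $L^p(\T)$, one has
$$\|f\|_{L^p(\T)} \;\sim\; \Big(\mathbb{E}_\omega\,\Big\|\sum_{n,\epsilon} r_n(\omega)\,e^{i 2\pi \epsilon \lambda_n \theta}\,g_{n,\epsilon}(\theta)\Big\|_{L^p(\T)}^p\Big)^{1/p},$$
with constants depending only on $\rho_\Lambda$; then applying Khintchine's inequality pointwise in $\theta$ to the Rademacher sum yields the factor $\sqrt{p}$ and produces the square function on the right-hand side. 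Alternatively, one can adapt Rudin's Riesz-product proof of classical Zygmund to the Banach-valued setting.

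With the vector-valued inequality in place, the remainder is routine. Minkowski's inequality in $L^{p/2}(\T)$ (valid since $p \geq 2$) gives
$$\Big\|\Big(\sum_{n,\epsilon} |g_{n,\epsilon}|^2\Big)^{1/2}\Big\|_{L^p(\T)}^2 \;\leq\; \sum_{n,\epsilon} \|g_{n,\epsilon}\|_{L^p(\T)}^2,$$
and the inductive hypothesis applied to each $\Lambda^{(k-1)}$-polynomial $g_{n,\epsilon}$ produces $\|g_{n,\epsilon}\|_{L^p} \leq A(\Lambda, k-1)\, p^{(k-1)/2}\, \|g_{n,\epsilon}\|_{L^2}$. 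Finally, since $\rho_\Lambda \geq 2$ makes the signed-sum representations of elements of $\Lambda^{(k)}$ essentially unique, orthogonality yields $\sum_{n,\epsilon} \|g_{n,\epsilon}\|_{L^2}^2 \lesssim_\Lambda \|f\|_{L^2}^2$. Chaining the estimates gives
$$\|f\|_{L^p(\T)} \;\lesssim_\Lambda\; \sqrt{p}\cdot A(\Lambda, k-1)\, p^{(k-1)/2}\, \|f\|_{L^2(\T)} \;=\; A(\Lambda, k)\, p^{k/2}\, \|f\|_{L^2(\T)},$$
closing the induction.

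I expect the main obstacle to lie in the vector-valued Zygmund inequality. The scalar version is classical, but lifting it to function-valued coefficients requires a genuine Rademacher randomization step combined with Khintchine's inequality, or, equivalently, a vector-valued Riesz-product construction. This is also where the hypothesis $\rho_\Lambda \geq 2$ is truly used: if $\rho_\Lambda$ is close to $2$, the blocks $e^{i 2\pi \epsilon \lambda_n \theta} g_{n,\epsilon}$ need not be frequency-separated, and one must first pass to sufficiently sparse sub-lacunary subsequences to make the randomization argument go through.
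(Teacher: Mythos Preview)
The paper does not give a proof of this statement; Bonami's theorem is quoted from \cite[Corollaire 4]{Bonami} and used as a black box in Section~\ref{Bonami_n}. There is therefore no in-paper argument to compare your proposal against.

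That said, your inductive scheme is a standard and essentially correct route to the result. The step you single out as the main obstacle genuinely is the crux, and your sketch of it needs more care than you indicate. The ``sign-unconditionality of the lacunary system in $L^p(\T)$ with constants depending only on $\rho_\Lambda$'' that you invoke is \emph{not} the scalar statement: your coefficients $g_{n,\epsilon}(\theta)$ depend on $\theta$, so what you are really asserting is that the multipliers $\sum_n r_n(\omega)\chi_{J_n}$ (with $J_n$ the frequency block carrying $e^{2\pi i\epsilon\lambda_n\theta}g_{n,\epsilon}$) act on $L^p$ with norm independent of $p$. Marcinkiewicz/Littlewood--Paley bounds only give constants growing with $p$, which would wreck the induction. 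A Riesz-product argument does yield $p$-free constants, but only once each $J_n$ is narrow enough relative to the gaps that the Fourier transform of $\prod_j(1+r_j\cos 2\pi\lambda_j\theta)$ equals $r_n/2$ on \emph{all} of $J_n$. With $\rho_\Lambda=2$ and $g_{n,\epsilon}$ built from all smaller indices this fails as written (for instance $\lambda_n+\lambda_{n-1}\in J_n$, where the Riesz product has Fourier coefficient $r_nr_{n-1}/4$, not $r_n/2$). Your remark about thinning to sub-lacunary sequences of larger ratio is therefore essential rather than cosmetic, and the thinning must be aggressive enough that the $\Lambda^{(k-1)}$-support of each $g_{n,\epsilon}$ sits strictly inside the gap between consecutive $\lambda$'s of the thinned sequence. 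With those adjustments (or, alternatively, by transferring to the Cantor group and invoking Bonami's hypercontractivity for degree-$k$ Walsh polynomials), the argument goes through.
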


To see how we can employ Bonami's result to our problem of establishing (\ref{equiv_n}) for $n=2$, write
 $$ \sum_{ n, n' \in \Lambda_1 } E_{n,n'} e^{2 \pi i (n - n') \phi} = \Big( \sum_{ n = n' } + \sum_{ n < n' } + \sum_{ n >  n' } \Big) E_{n,n'} e^{2 \pi i ( n - n') \phi} $$
and note that the diagonal term satisfies
$$ \sum_{ n = n' } E_{n,n'} e^{2 \pi i (n - n') \phi} = \sum_{ (m, n) \in \Lambda_1 \times \Lambda_2} | \widehat{f} ( m , n) |^2 = \| f \|^2_{L^2 (\T^2)} . $$
Since $p >2$, the function $x \mapsto x^{p/2}$ ($ x \geq 0$) is convex and hence
\begin{align*}
&\Big| \sum_{ n, n' \in \Lambda_1 } E_{n,n'} e^{2 \pi i (n-n') \theta} \Big|^{p/2} \leq \\
 & \frac{3^{p/2}} {3} \Big( \| f \|^p_{L^2 (\T^2)} +  \Big| \sum_{ n < n' } E_{n,n'} e^{2 \pi i (n -n') \phi} \Big|^{p/2} +  \Big| \sum_{ n > n' } E_{n,n'} e^{2 \pi i (n-n') \phi} \Big|^{p/2} \Big).
\end{align*}
Thus, 
\begin{align*}
& \| f \|^p_{L^p (\T^2)} \leq A_{\Lambda_1}^p p^{p/2} \int_{\T} \Big| \sum_{ n , n' \in \Lambda_2  } E_{n,n'} e^{2 \pi i (n-n') \phi} \Big|^{p/2} d \phi \leq \\ 
& (3A_{\Lambda_1})^p p^{p/2} \Big(  \| f \|^p_{L^2 (\T^2)} + \int_{\T} \Big| \sum_{ n < n'  } E_{n,n'} e^{2 \pi i (n-n') \phi} \Big|^{p/2} d \phi + \int_{\T} \Big| \sum_{ n > n'  } E_{n,n'} e^{2 \pi i (n-n') \phi} \Big|^{p/2} d \phi \Big)
\end{align*}
and so, by using Bonami's result to bound the off-diagonal terms, the last quantity is majorised by
$$ B^p \Big( p^{p/2} \| f \|^p_{L^2 (\T^2)} + p^p \big( \sum_{ n < n' } | E_{n,n'} |^2 \big)^{p/4} + p^p \big( \sum_{ n > n' } | E_{n,n'} |^2 \big)^{p/4}  \Big) ,$$
where $B$ depends only on $\Lambda_1, \Lambda_2$. By using the Cauchy-Schwarz inequality, one gets
$$ \sum_{ n, n' \in \Lambda_2 } | E_{n,n'} |^2 \leq \| f \|^4_{L^2 (\T^2)} $$
and hence, $ \| f \|_{L^p (\T^2)} \leq B (p^{p/2} + 2 p^p)^{1/p} \| f \|_{L^2 (\T^2)} \leq 3B p \| f \|_{L^2 (\T^2)}  $.
Therefore, the proof of (\ref{equiv_n}) for $n=2$ is complete.

As one can easily observe, in fact the above method can be used to obtain variants of Zygmund's inequality for spectral sets of the form $\Lambda_1^{(k_1)} \times \cdots \times \Lambda_n^{(k_n)} \subset \Z^n$, where $\Lambda_j^{(k_j)}= \big\{  \pm \lambda_{j,n_1} \pm \cdots \pm  \lambda_{j,n_{k_j}} : \  n_1 > \cdots > n_{k_j} \big\}$ and $\Lambda_j = (\lambda_{j,n})_{n\in \N}$ is a lacunary sequence with ratio at least $2$, for all $j=1, \cdots,n$. In other words, using the above method one obtains the following higher-dimensional extension of Bonami's result.

\begin{proposition}\label{higherBon} Let $\Lambda_j = (\lambda_{j,m})_{m\in \N}$ be lacunary sequences with $\rho_{\Lambda_j} \geq 2$ for $j=1, \cdots, n$. Let $(k_1, \cdots, k_n)$ be a  given $n$-tuple of positive integers. Then, there are positive constants $A_{\Lambda^{(k_1)}_1, \cdots, \Lambda^{(k_n)}_n}$ and $B_{\Lambda^{(k_1)}_1, \cdots, \Lambda^{(k_n)}_n}$ such that
\begin{align*}
&\Big( \sum_{ (m_1, \cdots, m_n) \in \Lambda_1^{(k_1)} \times \cdots \times \Lambda_n^{(k_n)} } |\widehat{f} (m_1, \cdots, m_n)|^2 \Big)^{1/2} \leq \\
& A_{\Lambda^{(k_1)}_1, \cdots, \Lambda^{(k_n)}_n} \int_{\T^n} |f (\underline{\theta})| \log^{K_n/2} (1 +|f (\underline{\theta})| ) d \underline{\theta} + B_{\Lambda^{(k_1)}_1, \cdots, \Lambda^{(k_n)}_n},
\end{align*}
where $K_n = k_1 + \cdots + k_n$.
\end{proposition}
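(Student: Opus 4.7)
By the standard duality that transforms (\ref{well-known}) into (\ref{equiv_n}), the proposition is equivalent to showing that every $\Lambda_1^{(k_1)} \times \cdots \times \Lambda_n^{(k_n)}$-polynomial $f$ on $\T^n$ satisfies
\[
\|f\|_{L^p(\T^n)} \lesssim p^{K_n/2}\,\|f\|_{L^2(\T^n)} \qquad (p > 2),
\]
with the implicit constant depending only on the sets $\Lambda_j^{(k_j)}$. The plan is to prove this by induction on $n$, the base case $n=1$ being Bonami's theorem itself.

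For the inductive step I would follow the scheme executed for $n=2$, $k_1 = k_2 = 1$ in the discussion preceding the proposition. Writing $\underline{\theta}' = (\theta_2, \ldots, \theta_n)$ and $f(\theta_1, \underline{\theta}') = \sum_{m \in \Lambda_1^{(k_1)}} f_{\underline{\theta}'}(m)\,e^{2\pi i m \theta_1}$, Bonami's inequality in $\theta_1$ followed by Fubini gives
\[
\|f\|_{L^p(\T^n)}^p \lesssim p^{p k_1/2} \int_{\T^{n-1}} \Big( \sum_m |f_{\underline{\theta}'}(m)|^2 \Big)^{p/2} d\underline{\theta}'.
\]
Parseval in $\theta_1$ rewrites the integrand as the $(p/2)$-power of $\sum_{\underline{n}, \underline{n}'} E_{\underline{n}, \underline{n}'}\, e^{2\pi i (\underline{n} - \underline{n}')\cdot \underline{\theta}'}$, where $E_{\underline{n}, \underline{n}'} = \sum_m \widehat{f}(m, \underline{n})\overline{\widehat{f}(m, \underline{n}')}$. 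The diagonal term $\underline{n} = \underline{n}'$ collapses to $\|f\|_{L^2(\T^n)}^2$, and I would split the remainder into the finitely many off-diagonal pieces indexed by the profile $(\sigma_2, \ldots, \sigma_n) \in \{0, +, -\}^{n-1}$ that records, coordinate by coordinate, whether $n_j = n_j'$, $n_j > n_j'$ or $n_j < n_j'$.

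Each off-diagonal piece $g(\underline{\theta}')$ is a trigonometric polynomial whose frequencies lie in a product $E_2 \times \cdots \times E_n$ with $E_j \subset \Lambda_j^{(k_j)} - \Lambda_j^{(k_j)} \setminus \{0\}$; after further decomposition, each such $E_j$ is covered by finitely many sets of the form $\Lambda_j^{(\ell_j)}$ with $\ell_j \leq 2 k_j$, the ``doubled'' frequencies $\pm 2\lambda_{j,a}$ produced by partially overlapping indices being absorbed by the observation that $\Lambda_j \cup 2\Lambda_j$ is a finite union of lacunary sequences with ratio at least $\rho_{\Lambda_j}$ and that Bonami-type estimates transfer from a single lacunary sequence to any such finite union with only a constant loss. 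The inductive hypothesis applied in dimension $n-1$ with the exponent tuple $(2k_2, \ldots, 2k_n)$ (whose sum is $2(K_n - k_1)$) then yields
\[
\|g\|_{L^{p/2}(\T^{n-1})} \lesssim (p/2)^{K_n - k_1}\,\|g\|_{L^2(\T^{n-1})},
\]
while Cauchy--Schwarz on the coefficients $E_{\underline{n}, \underline{n}'}$, exactly as in the $n=2$ calculation, gives $\|g\|_{L^2(\T^{n-1})} \lesssim \|f\|_{L^2(\T^n)}^2$. Assembling the diagonal and off-diagonal contributions produces $\|f\|_{L^p(\T^n)}^p \lesssim p^{pK_n/2}\|f\|_{L^2(\T^n)}^p$, and a $p$-th root completes the inductive step.

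The main obstacle I anticipate is the combinatorial bookkeeping required to recognise the difference sets $\Lambda_j^{(k_j)} - \Lambda_j^{(k_j)}$ as finite unions of sets to which Bonami's theorem (or, equivalently, the inductive hypothesis) applies. The essentially new feature beyond the $n=2$, $k_1 = k_2 = 1$ analysis treated in the text is the appearance of doubled lacunary frequencies when the indices of $\underline{n}$ and $\underline{n}'$ partially overlap; this is what forces one to invoke Bonami-type inequalities on finite unions of lacunary sequences rather than on single ones, but once that transfer is in place the inductive scheme runs through cleanly.
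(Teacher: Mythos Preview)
Your inductive scheme is sound, but you reverse the order of the two operations compared with the paper, and that reversal is exactly what creates the combinatorial obstacle you flag at the end. The paper isolates Lemma~\ref{Bon_Lemma}: given a set $E\subset\Z^n$ already satisfying $\|g\|_{L^p}\le C_E\,p^{N_E/2}\|g\|_{L^2}$, the set $E\times\Lambda^{(k)}$ satisfies the same with exponent $(N_E+k)/2$. Its proof applies the hypothesis on $E$ \emph{first}, in the $n$ old variables; only afterwards is one left with a \emph{one-dimensional} polynomial in the new variable $\phi$, whose spectrum sits in $\Lambda^{(k)}-\Lambda^{(k)}$, and a single call to one-dimensional Bonami (with parameter $2k$) closes the step. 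The induction is then just repeated application of the lemma with $E=\Lambda_1^{(k_1)}\times\cdots\times\Lambda_n^{(k_n)}$ and $\Lambda=\Lambda_{n+1}$.

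You instead apply one-dimensional Bonami first, in $\theta_1$, and defer the inductive hypothesis to the remaining $n-1$ variables. The effect is that difference sets now appear \emph{simultaneously in all} of those variables: your polynomial $g(\underline\theta')$ has spectrum in $\prod_{j=2}^{n}\bigl(\Lambda_j^{(k_j)}-\Lambda_j^{(k_j)}\bigr)$, which is not a product of sets of the form $\Lambda_j^{(\ell_j)}$ (because of the doubled frequencies you mention). Your proposed remedy---passing to finite unions of lacunary sequences---can be made to work, but it means the induction does not close on the statement as formulated and needs a slight strengthening of the inductive hypothesis. The paper's order of operations sidesteps this entirely: the difference set lives in a single scalar variable, where one-dimensional Bonami applies directly. (The paper's own sketch of that step is also informal about the overlapping-index case, so neither argument is completely airtight on this minor point; but confining it to one dimension makes it far easier to patch.)
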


To prove Proposition \ref{higherBon}, the main idea is to induct on the dimension $n \in \N$. To use induction, one needs the following lemma.

\begin{lemma}\label{Bon_Lemma} Let $n \geq 1$ be a given integer. Let $E$ be a subset of $\Z^n$, such that there are constants $C_E > 0$ and $N_E \in \N$ so that for every $E$-polynomial $g$ one has  $ \| g \|_{L^p (\T^n)} \leq C_E p^{N_E/2} \| g \|_{L^2 (\T^n)}$ for every $p > 2$.

Then, for every lacunary sequence $\Lambda = (\lambda_n)_{n\in \N}$ with ratio at least $2$ and for each $k \in \N$, there are positive constants $A_{E, \Lambda, k}$ and $B_{E, \Lambda, k }$ such that
$$ \Big( \sum_{(m,n)\in E \times \Lambda^{(k)} } |\widehat{f} (m,n)|^2 \Big)^{1/2} \leq A_{E, \Lambda, k} \int_{\T^{n+1}} |f (\underline{\theta})| \log^{(N_E + k)/2} (1+ |f (\underline{\theta})| ) d \underline{\theta} + B_{E, \Lambda, k},$$
where $\Lambda^{(k)} = \big\{  \pm \lambda_{j_1} \pm \cdots \pm  \lambda_{j_k}:  j_1 > \cdots > j_k \big\}$.
\end{lemma}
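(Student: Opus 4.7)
The strategy is to prove, by induction on $n$ via this lemma, the dual inequality for polynomials, namely
$$\|f\|_{L^p(\T^{n+1})}\lesssim_{E,\Lambda,k} p^{(N_E+k)/2}\|f\|_{L^2(\T^{n+1})},\qquad p>2,$$
for every $(E\times\Lambda^{(k)})$-polynomial $f$. Once this is established for all $p>2$, the stated $L\log^{(N_E+k)/2}L$--$L^2$ bound follows by the standard duality and dyadic summation used to pass from an $L^p$-formulation to Zygmund's inequality (the same passage that was carried out, in the two-dimensional case, in the discussion preceding the lemma).

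Mimicking that two-dimensional calculation, I would write $\underline{\theta}=(\underline{\theta}',\phi)\in\T^n\times\T$ and decompose
$$f(\underline{\theta}',\phi)=\sum_{m\in E}f_\phi(m)\,e^{i2\pi m\cdot\underline{\theta}'},\qquad f_\phi(m)=\sum_{n\in\Lambda^{(k)}}\widehat{f}(m,n)\,e^{i2\pi n\phi}.$$
For each fixed $\phi$, the function $\underline{\theta}'\mapsto f(\underline{\theta}',\phi)$ is an $E$-polynomial, so the hypothesis on $E$ gives
$$\int_{\T^n}|f(\underline{\theta}',\phi)|^p\,d\underline{\theta}'\leq C_E^p\,p^{pN_E/2}\Big(\sum_{m\in E}|f_\phi(m)|^2\Big)^{p/2}.$$
Integrating in $\phi$ and expanding the square,
$$\sum_{m\in E}|f_\phi(m)|^2=\|f\|_{L^2(\T^{n+1})}^2+\sum_{\substack{n,n'\in\Lambda^{(k)}\\ n\neq n'}}E_{n,n'}\,e^{i2\pi(n-n')\phi},\qquad E_{n,n'}:=\sum_{m\in E}\widehat{f}(m,n)\,\overline{\widehat{f}(m,n')},$$
and by convexity of $t\mapsto t^{p/2}$ the $L^{p/2}(\T)$ norm of the right-hand side splits into a diagonal contribution that produces $\|f\|_{L^2}^p$ and an off-diagonal contribution to be estimated.

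The crucial observation is that the off-diagonal polynomial in $\phi$ is frequency-supported in $(\Lambda^{(k)}-\Lambda^{(k)})\setminus\{0\}$; after cancelling any coinciding indices between the two copies of $\Lambda^{(k)}$, this set is contained in $\bigcup_{K=1}^{2k}\Lambda^{(K)}$. Applying Bonami's theorem to each value of $K\leq 2k$ gives an $L^{p/2}(\T)$ estimate with growth $p^{k/2}$ (the ranges $p\in(2,4]$ are handled trivially by $L^{p/2}\hookrightarrow L^2$), so that together with the Cauchy--Schwarz bound
$$\sum_{n,n'\in\Lambda^{(k)}}|E_{n,n'}|^2\leq\Big(\sum_{m,n}|\widehat{f}(m,n)|^2\Big)^{2}=\|f\|_{L^2(\T^{n+1})}^4,$$
the off-diagonal contribution is bounded by a constant multiple of $p^{pk/2}\,\|f\|_{L^2(\T^{n+1})}^p$. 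Summing the diagonal and off-diagonal estimates and taking $p$-th roots yields the desired $L^p$ inequality with exponent $(N_E+k)/2$.

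The main obstacle is not analytic: every step above is a direct analogue of what is already done for the two-dimensional case. The subtlety lies in the combinatorial bookkeeping, namely checking that $(\Lambda^{(k)}-\Lambda^{(k)})\setminus\{0\}$ decomposes as a finite union of $\Lambda^{(K)}$ sets with $K\leq 2k$, so that Bonami's theorem applies uniformly across all these $K$ with a constant depending only on $\Lambda$ and $k$. Once that is verified, the transition from the $L^p$ estimate to the Orlicz-norm formulation of the lemma is routine, carried out exactly as in the classical derivation of Zygmund's inequality.
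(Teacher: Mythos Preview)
Your proposal is correct and follows essentially the same route as the paper: fix $\phi$, apply the hypothesis on $E$, expand the square in the $\phi$ variable, separate the diagonal term, and handle the off-diagonal polynomial in $\phi$ via Bonami's theorem together with the Cauchy--Schwarz bound $\sum_{n,n'}|E_{n,n'}|^2\le\|f\|_{L^2}^4$. Your decomposition of $(\Lambda^{(k)}-\Lambda^{(k)})\setminus\{0\}$ into $\bigcup_{K=1}^{2k}\Lambda^{(K)}$ is in fact a slightly cleaner way of organising the combinatorics than the paper's splitting by sign patterns over $j_1>\cdots>j_{2k}$; one small wording slip is that Bonami on a $\Lambda^{(K)}$-polynomial with $K\le 2k$ gives $\|h\|_{L^{p/2}}\lesssim (p/2)^{K/2}\le (p/2)^{k}$, not $p^{k/2}$, but this is exactly what is needed to produce your (correct) bound $p^{pk/2}\|f\|_{L^2}^p$ for the off-diagonal contribution.
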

\begin{proof} The proof of the lemma is a variant of the argument given above. More precisely, take an $E \times \Lambda^{(k)}$-polynomial $g$ over $\T^{n+1}$ and for $\theta  \in \T^n$, $\phi \in \T$ write
$$ g(\theta, \phi) = \sum_{m \in E} \Big(  \sum_{n \in \Lambda^{(k)}} \widehat{g} (m,n) e^{2 \pi i n \phi} \Big) e^{2 \pi i (m \cdot \theta)} ,$$
where $m \cdot \theta $ denotes the dot product of $m$ and $\theta$, i.e. $m \cdot \theta = m_1 \theta_1 +  \cdots +  m_n \theta_n$, in the case where $n > 1$. Otherwise, $m \cdot \theta $ is just the scalar multiplication of $m \in \Z$ with $\theta \in \T$. For $p >2$, using our hypothesis, one has
$$ \| g \|^p_{L^p (\T^{n+1})} \leq C_E^p p^{p N_E /2}\int_{\T} \Big( \sum_{m \in E }\Big| \sum_{n \in \Lambda^{(k)}} \widehat{g} (m,n) e^{2 \pi i n \phi}   \Big|^2 \Big)^{p/2} d \phi.$$
We write
$$ \sum_{m \in E} \Big| \sum_{n \in \Lambda^{(k)}} \widehat{g} (m,n) e^{2 \pi i n \phi}   \Big|^2  = \sum_{n,n' \in \Lambda^{(k)}} \Big( \sum_{m \in E} \widehat{g} (m,n) \overline{\widehat{g}(m,n')} \Big) e^{2 \pi i (n-n') \phi}$$
and then split the off-diagonal part of the last sum into terms of the form 
$$  \sum_{ j_1 > \cdots > j_{2k} } \Big( \sum_{m \in E} \widehat{g} (m, \pm \lambda_{j_1} \pm \cdots \pm \lambda_{j_k}) \overline{\widehat{g}(m, \pm \lambda_{j_{k+1}} \pm \cdots \pm \lambda_{j_{2k}})} \Big) e^{2 \pi i (\pm \lambda_{j_1} \pm \cdots \pm \lambda_{j_{2k}}) \phi} $$
in order to use Bonami's theorem. The diagonal term is treated as before. The number of the above subsums depends only on $k$ and not on $g, \Lambda, p$ and so, exactly as above, one shows that
$$ \| g \|^p_{L^p (\T^{n+1})} \leq D_{E, \Lambda, k}^p p^{p(N_E+k) /2}  \Big(  \sum_{ n,n' \in \Lambda^{(k)} } |E_{n,n'}|^2  \Big)^{p/4} , $$
where $E_{n,n'} =  \sum_{m \in E} \widehat{g} (m,n) \overline{\widehat{g}(m,n')} $. By using the Cauchy-Schwarz inequality, the desired estimate follows.
\end{proof}

To prove Proposition \ref{higherBon}, assume that we are given a sequence of lacunary sequences $(\Lambda_j)_{j \in \N}$ with $ \rho_{\Lambda_j} \geq 2$ and a sequence of positive integers $(k_j)_{j \in \N}$. For each $j \in \N$ form the sumsets $\Lambda^{(k_j)}_j$. We shall use induction on the dimension $n\in \N$. Note that the one-dimensional case is Bonami's theorem. Suppose now that for $n \in \N$, Proposition \ref{higherBon} holds. To obtain the $(n+1)$-dimensional case, we set $E = \Lambda_1^{(k_1)} \times \cdots \times \Lambda_n^{(k_n)}$ and $\Lambda = \Lambda_{n+1}$. By the inductive step, it follows that $E$ satisfies the assumptions of Lemma \ref{Bon_Lemma} for  $N_E = k_1 + \cdots + k_n$. Therefore, by using that lemma, the $(n+1)$-dimensional case follows at once. Hence, the proof of Proposition \ref{higherBon} is complete.

\bibliographystyle{plainnat}
\bibliography{b2}

\begin{thebibliography}{29}
\providecommand{\natexlab}[1]{#1}
\providecommand{\url}[1]{\texttt{#1}}
\expandafter\ifx\csname urlstyle\endcsname\relax
  \providecommand{\doi}[1]{doi: #1}\else
  \providecommand{\doi}{doi: \begingroup \urlstyle{rm}\Url}\fi

\bibitem[Blasco and Pe\l{}czy\'nski(1991)]{Blasco}
O.~Blasco and A.~Pe\l{}czy\'nski.
\newblock Theorems of {H}ardy and {P}aley for vector-valued analytic functions
  and related classes of {B}anach spaces.
\newblock \emph{Trans. Amer. Math. Soc.}, 323\penalty0 (1):\penalty0 335--367,
  1991.

\bibitem[Blei(2001)]{Blei}
Ron Blei.
\newblock \emph{Analysis in integer and fractional dimensions}, volume~71 of
  \emph{Cambridge Studies in Advanced Mathematics}.
\newblock Cambridge University Press, Cambridge, 2001.

\bibitem[Bonami(1970)]{Bonami}
Aline Bonami.
\newblock \'{E}tude des coefficients de {F}ourier des fonctions de
  {$L^{p}(G)$}.
\newblock \emph{Ann. Inst. Fourier (Grenoble)}, 20\penalty0 (fasc. 2):\penalty0
  335--402 (1971), 1970.

\bibitem[Bourgain(1985)]{Bourgain_Sidon}
Jean Bourgain.
\newblock Sidon sets and {R}iesz products.
\newblock \emph{Ann. Inst. Fourier (Grenoble)}, 35\penalty0 (1):\penalty0
  137--148, 1985.

\bibitem[Bourgain and Lewko(2015)]{Sidonicity}
Jean Bourgain and Mark Lewko.
\newblock Sidonicity and variants of {K}aczmarz's problem.
\newblock \emph{arXiv preprint arXiv:1504.05290}, 2015.

\bibitem[Duren and Shields(1970)]{DurenShields}
P.~L. Duren and A.~L. Shields.
\newblock Coefficient multipliers of {$H^{p}$} and {$B^{p}$} spaces.
\newblock \emph{Pacific J. Math.}, 32:\penalty0 69--78, 1970.

\bibitem[Fournier(1979)]{Fournier}
John J.~F. Fournier.
\newblock On a theorem of {P}aley and the {L}ittlewood conjecture.
\newblock \emph{Ark. Mat.}, 17\penalty0 (2):\penalty0 199--216, 1979.

\bibitem[Graham and Hare(2013)]{Sidon_book}
Colin~C. Graham and Kathryn~E. Hare.
\newblock \emph{Interpolation and {S}idon sets for compact groups}.
\newblock CMS Books in Mathematics/Ouvrages de Math\'ematiques de la SMC.
  Springer, New York, 2013.
\newblock ISBN 978-1-4614-5391-8; 978-1-4614-5392-5.

\bibitem[Hardy and Littlewood(1937)]{H-L}
G.H. Hardy and J.E. Littlewood.
\newblock Notes on the theory of series ({XX}): {G}eneralizations of a theorem
  of {P}aley.
\newblock \emph{The Quarterly Journal of Mathematics}, 8:\penalty0 161--171,
  1937.

\bibitem[Ingham(1930)]{Ingham}
A.~E. Ingham.
\newblock Note on a certain power series.
\newblock \emph{Ann. of Math. (2)}, 31\penalty0 (2):\penalty0 241--250, 1930.

\bibitem[Lust-Piquard and Pisier(1991)]{Pisier_Paley}
Fran\c{c}oise Lust-Piquard and Gilles Pisier.
\newblock Noncommutative {K}hintchine and {P}aley inequalities.
\newblock \emph{Ark. Mat.}, 29\penalty0 (2):\penalty0 241--260, 1991.

\bibitem[Marcus and Pisier(1981)]{MarcusPisier}
Michael~B. Marcus and Gilles Pisier.
\newblock \emph{Random {F}ourier series with applications to harmonic
  analysis}, volume 101 of \emph{Annals of Mathematics Studies}.
\newblock Princeton University Press, Princeton, N.J.; University of Tokyo
  Press, Tokyo, 1981.

\bibitem[McCall(1974)]{McCall}
James~D. McCall, Jr.
\newblock A multiplier theorem for {F}ourier transforms.
\newblock \emph{Trans. Amer. Math. Soc.}, 189:\penalty0 359--369, 1974.

\bibitem[Oberlin(1979)]{Oberlin}
Daniel~M. Oberlin.
\newblock Two multiplier theorems for {$H^{1}(U^{2})$}.
\newblock \emph{Proc. Edinburgh Math. Soc. (2)}, 22\penalty0 (1):\penalty0
  43--47, 1979.

\bibitem[Paley(1930)]{Paley_counterexamples}
R.~E. A.~C. Paley.
\newblock On {S}ome {P}roblems {C}onnected with {W}eierstrass's
  {N}on-{D}ifferentiable {F}unction.
\newblock \emph{Proc. London Math. Soc.}, S2-31\penalty0 (1):\penalty0
  301--328, 1930.

\bibitem[Paley(1932)]{Paley_note}
R.~E. A.~C. Paley.
\newblock A {N}ote on {P}ower {S}eries.
\newblock \emph{J. London Math. Soc.}, S1-7\penalty0 (2):\penalty0 122--130,
  1932.

\bibitem[Paley(1933)]{Paley}
R.~E. A.~C. Paley.
\newblock On the lacunary coefficients of power series.
\newblock \emph{Ann. of Math. (2)}, 34\penalty0 (3):\penalty0 615--616, 1933.

\bibitem[Pisier(1978{\natexlab{a}})]{Pisier_Sidon}
Gilles Pisier.
\newblock Ensembles de {S}idon et processus gaussiens.
\newblock \emph{C. R. Acad. Sci. Paris S\'er. A-B}, 286\penalty0 (15):\penalty0
  A671--A674, 1978{\natexlab{a}}.

\bibitem[Pisier(1978{\natexlab{b}})]{Pisier_aleatoires}
Gilles Pisier.
\newblock Sur l'espace de {B}anach des s\'eries de {F}ourier al\'eatoires
  presque s\^urement continues.
\newblock In \emph{S\'eminaire sur la {G}\'eom\'etrie des {E}spaces de {B}anach
  (1977--1978)}, pages Exp. No. 17--18, 33. \'Ecole Polytech., Palaiseau,
  1978{\natexlab{b}}.

\bibitem[Rider(1975)]{Rider}
Daniel Rider.
\newblock Randomly continuous functions and {S}idon sets.
\newblock \emph{Duke Math. J.}, 42\penalty0 (4):\penalty0 759--764, 1975.

\bibitem[Rudin(1957)]{Rudin_Paley}
Walter Rudin.
\newblock Remarks on a theorem of {P}aley.
\newblock \emph{J. London Math. Soc.}, 32:\penalty0 307--311, 1957.

\bibitem[Rudin(1960)]{Rudin_gaps}
Walter Rudin.
\newblock Trigonometric series with gaps.
\newblock \emph{J. Math. Mech.}, 9:\penalty0 203--227, 1960.

\bibitem[Rudin(1962)]{Rudin_book}
Walter Rudin.
\newblock \emph{Fourier analysis on groups}.
\newblock Interscience Tracts in Pure and Applied Mathematics, No. 12.
  Interscience Publishers (a division of John Wiley and Sons), New York-London,
  1962.

\bibitem[Sidon(1927)]{Sidon}
S.~Sidon.
\newblock Verallgemeinerung eines {S}atzes \"uber die absolute {K}onvergenz von
  {F}ourierreihen mit {L}\"ucken.
\newblock \emph{Math. Ann.}, 97\penalty0 (1):\penalty0 675--676, 1927.

\bibitem[Stein(1966)]{Stein_multiplicateurs}
Elias Stein.
\newblock Classes {$H^{p}$}, multiplicateurs et fonctions de
  {L}ittlewood-{P}aley. {A}pplications de r\'esultats ant\'erieurs.
\newblock \emph{C. R. Acad. Sci. Paris S\'er. A-B}, 263:\penalty0 A780--A781,
  1966.

\bibitem[Tao and Wright(2001)]{TW}
Terence Tao and James Wright.
\newblock Endpoint multiplier theorems of {M}arcinkiewicz type.
\newblock \emph{Rev. Mat. Iberoamericana}, 17\penalty0 (3):\penalty0 521--558,
  2001.

\bibitem[Yudin(2001)]{Yudin}
V.~A. Yudin.
\newblock Multidimensional versions of {P}aley's inequality.
\newblock \emph{Mat. Zametki}, 70\penalty0 (6):\penalty0 941--947, 2001.

\bibitem[Zygmund(1930)]{Zygmund_paper}
Antoni Zygmund.
\newblock On the convergence of lacunary trigonometric series.
\newblock \emph{Fundamenta Mathematicae}, 16\penalty0 (1):\penalty0 90--107,
  1930.

\bibitem[Zygmund(2002)]{Zygmund_book}
Antoni Zygmund.
\newblock \emph{Trigonometric series. {V}ol. {I}, {II}}.
\newblock Cambridge Mathematical Library. Cambridge University Press,
  Cambridge, third edition, 2002.

\end{thebibliography}

\end{document}